\documentclass[11pt]{article}

\usepackage[a4paper,margin=1in]{geometry}
\usepackage[T1]{fontenc}
\usepackage{lmodern}
\usepackage{amsmath,amssymb,amsthm,mathtools}
\usepackage{enumitem}
\usepackage{microtype}
\usepackage[colorlinks=true,linkcolor=blue,citecolor=blue,urlcolor=blue]{hyperref}

\newtheorem{theorem}{Theorem}[section]
\newtheorem{proposition}[theorem]{Proposition}
\newtheorem{lemma}[theorem]{Lemma}
\newtheorem{corollary}[theorem]{Corollary}
\newtheorem{remark}[theorem]{Remark}

\newcommand{\g}{\mathfrak g}
\newcommand{\Rad}{\operatorname{Rad}}
\newcommand{\cO}{\mathcal O}

\title{Odd-rank maximal ideals at collapsing levels of type $D$}
\author{Sihai Jin}
\date{}

\begin{document}
\maketitle

\begin{abstract}
We determine the defining ideal of the simple affine vertex algebra
$L_{2-\ell}(\mathfrak{so}_{2\ell})$ for every odd $\ell\ge5$.
Per\v{s}e's quadratic singular vector alone generates the maximal ideal of
the universal affine vertex algebra at this level.  Together with the
established even-rank presentation, this gives a complete parity-dependent
description of this type-$D$ collapsing family at $k=2-\ell$: one quadratic
generator in
odd rank, and a quadratic generator together with two Pfaffian generators
in even rank.  The proof establishes a rank reduction for the quadratic
quotients under minimal Drinfeld--Sokolov reduction, valid in both parities.
Nonvanishing of reduction on nonzero graded subquotients then lifts
simplicity along the odd-rank chain from the known base case $D_3\cong A_3$
at level $-1$.
\end{abstract}

\medskip
\noindent\textit{2020 Mathematics Subject Classification.} Primary 17B69;
Secondary 17B67, 17B68.

\noindent\textit{Key words and phrases.} Affine vertex algebra, maximal ideal,
Drinfeld--Sokolov reduction, collapsing level, singular vector.

\section{Introduction}

An explicit presentation of an algebra separates relations that are known
to hold from relations that suffice to define it.  For affine vertex
algebras, this distinction is the problem of determining the maximal ideal
of the universal vacuum module.  Let $V^k(\g)$ be the universal affine
vertex algebra of a complex simple Lie algebra $\g$ at level $k$, and let
$L_k(\g)$ be its simple graded quotient.  A set of generators for
$\Rad V^k(\g):=\ker(V^k(\g)\to L_k(\g))$ gives all defining relations for
$L_k(\g)$ and identifies which $V^k(\g)$-modules factor through it.
At negative non-admissible levels, constructing a singular vector is often
more accessible than proving that no further generators are needed.

We address this problem for $D_\ell=\mathfrak{so}_{2\ell}$ at
$k=2-\ell=1-h^\vee/2$, where $h^\vee=2\ell-2$ is the dual Coxeter number.
This is a collapsing level: the simple minimal $W$-algebra obtained by
Hamiltonian reduction is generated by its conformal-weight-one currents
\cite[Theorem~3.3 and Table~4]{AKMPPStructural}.  Such a collapse simplifies the reduced algebra,
but does not by itself give the defining ideal of the original affine
algebra.  The issue is to carry the proposed relations through reduction
and then recover simplicity before reduction.

Use the roots $\pm\varepsilon_i\pm\varepsilon_j$ of $D_\ell$, where the
$\varepsilon_i$ form an orthonormal basis.  For a root $\alpha$, write
$e_\alpha$ for the root vector normalized in Section~\ref{sec:conventions},
and $e_\alpha(n)=e_\alpha\otimes t^n$ for its current mode.  The symbol
$\mathbf1$ denotes the vacuum.  Per\v{s}e
\cite[Theorem~3.1]{PerseD} constructed the quadratic singular vector
\begin{equation}\label{eq:vell}
 v_\ell=\sum_{i=2}^{\ell}
 e_{\varepsilon_1-\varepsilon_i}(-1)
 e_{\varepsilon_1+\varepsilon_i}(-1)\mathbf1
 \in V^{2-\ell}(D_\ell).
\end{equation}
Here and below $\langle v\rangle$ denotes the vertex-algebra ideal generated
by $v$.  Put $Q_\ell=V^{2-\ell}(D_\ell)/\langle v_\ell\rangle$.
This ideal is proper, as recalled in Lemma~\ref{lem:proper}, so there is a
canonical surjection $Q_\ell\twoheadrightarrow L_{2-\ell}(D_\ell)$.
Our main result proves that the quadratic relation is complete in odd rank.

\begin{theorem}\label{thm:main}
Let $\ell\ge5$ be odd.  Then
$$
 \operatorname{Rad}V^{2-\ell}(D_\ell)=\langle v_\ell\rangle.
$$
Equivalently, $Q_\ell\cong L_{2-\ell}(D_\ell)$.
\end{theorem}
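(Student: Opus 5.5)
The argument is an induction on odd $\ell$, starting from the base case $\ell=3$. There $D_3\cong A_3$, the level is $-1$, and it is known that $\langle v_3\rangle$ is the maximal ideal of $V^{-1}(\mathfrak{sl}_4)$, so $Q_3\cong L_{-1}(D_3)$. The inductive step goes from $\ell-2$ to $\ell$, not from $\ell-1$ to $\ell$. The reason is that minimal Drinfeld--Sokolov reduction $H_{f_\theta}$ of $D_\ell$ at the collapsing level $k=2-\ell$ produces affine currents for the centralizer $\mathfrak{sl}_2\times\mathfrak{so}_{2\ell-4}=\mathfrak{sl}_2\times D_{\ell-2}$. This preserves the parity of the rank. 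By the collapsing result \cite[Theorem~3.3]{AKMPPStructural}, the simple quotient $H_{f_\theta}(L_{2-\ell}(D_\ell))$ is the simple affine algebra of this centralizer, with the $\mathfrak{sl}_2$ factor at a level where it may be trivial or degenerate. The $D_{\ell-2}$ part sits at level $k+2=4-\ell=2-(\ell-2)$, which is exactly the next member of the family.

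The first key step is the \emph{rank reduction for the quadratic quotients}. I will show that $H_{f_\theta}(Q_\ell)$ is isomorphic to $Q_{\ell-2}$, possibly tensored with the appropriate $\mathfrak{sl}_2$-factor, which is trivial at the collapsing level. To do this I use the exactness of the minimal reduction functor on category $\mathcal O$ at non-critical level, due to Arakawa. This gives $H(Q_\ell)=H(V^{k}(D_\ell))/H(\langle v_\ell\rangle)$, with $H(V^k(D_\ell))=W^k(D_\ell,f_\theta)$. I then compute the image of $v_\ell$ in the $W$-algebra. Concretely, I take $\theta=\varepsilon_1+\varepsilon_2$ and track the terms of $v_\ell$ with $i\ge 3$. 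These should map to Per\v{s}e's vector $v_{\ell-2}$ for the sub-root system on $\varepsilon_3,\dots,\varepsilon_\ell$, up to a nonzero scalar and corrections lying in the ideal generated by that image. I also have to check that the image of $\langle v_\ell\rangle$ kills the Virasoro field, or the $\mathfrak{sl}_2$ currents and the weight-$3/2$ generators $G^{\pm}$, in the way collapse predicts. This step is the main obstacle. It requires an explicit free-field or BRST computation of the image of a weight-two singular vector, together with the argument that the images of the descendants of $v_\ell$ generate precisely the ideal generated by $v_{\ell-2}$ and the collapsing relations. I would first try to use the $\mathfrak g^\natural$-equivariance of $H$. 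The image of $v_\ell$ is a $\mathfrak g^\natural$-singular vector of weight two. The space of such vectors is small, so a direct computation of the leading coefficient should identify it.

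The second step \emph{lifts simplicity}. Suppose $\operatorname{Rad}Q_\ell=I\neq 0$. Since $I$ is a graded $\widehat{\g}$-submodule in category $\mathcal O$, choose a nonzero graded subquotient $M$ of $I$, for example one generated by a singular vector of some highest weight $\lambda$. Here I invoke the nonvanishing principle: $H_{f_\theta}(M)\neq0$ whenever $M$ is a nonzero highest-weight subquotient whose weight $\lambda$ satisfies $\lambda(\alpha_0^\vee)\notin\mathbb Z_{\ge0}$, or the analogous condition. I will have to verify that condition for every weight that can occur in $Q_\ell$. Possible tools are a Zhu-algebra or character argument, or the fact that $Q_\ell$ is integrable with respect to $\mathfrak{sl}_2^{\theta}$ only in limited ways. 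Exactness then embeds $H(I)$ as a nonzero ideal of $H(Q_\ell)\cong Q_{\ell-2}$, and this ideal lies in the kernel of the map to $H(L_{2-\ell}(D_\ell))\cong L_{4-\ell}(D_{\ell-2})$. That kernel is zero by the inductive hypothesis, which is a contradiction. So $I=0$ and $Q_\ell\cong L_{2-\ell}(D_\ell)$.

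Odd parity enters in two places. First, the chain $\ell\to\ell-2$ terminates at the base case $D_3$, whereas in even rank it ends in a case that needs Pfaffian generators. Second, in even rank the Pfaffian singular vectors are additional nonzero subquotients of $Q_\ell$ that survive reduction, and this is why the argument cannot conclude simplicity there. In odd rank the analogous Pfaffian-type vectors have no counterpart of the right weight. I would check this by confirming that no $D_\ell$-weight of the form $\varepsilon_1+\dots+\varepsilon_\ell$-type spinorial combination appears at the relevant degree when $\ell$ is odd.
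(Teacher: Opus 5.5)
Your overall architecture agrees with the paper's: induction $\ell\to\ell-2$ from $Q_3\cong L_{-1}(\mathfrak{sl}_4)$, exactness of minimal reduction, nonvanishing on subquotients, and lifting of simplicity. But there is a genuine gap. The central rank-reduction step is left open, and it is aimed at the wrong target.

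The class of $v_\ell$ in $W^{2-\ell}(D_\ell,f_\theta)$ cannot be related to $v_{\ell-2}$. Since $v_\ell$ has affine degree $2$ and finite weight $2\varepsilon_1$ with $(2\varepsilon_1)(x_\theta)=1$, its reduced conformal weight is $1$, not $2$. Its $\mathfrak h^\natural$-weight is $\theta_A=\varepsilon_1-\varepsilon_2$, which vanishes on the $D_{\ell-2}$ Cartan, whereas $v_{\ell-2}$ has weight $2\varepsilon_3$. So weight considerations force $[v_\ell]\in\mathbb C J_A$ with $J_A=J^{\{e_{\theta_A}\}}$.

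Concretely, the terms of $v_\ell$ with $i\ge3$ have both factors in $\g_{1/2}$ and vanish on the Slodowy slice. The $i=2$ term $e_{\theta_A}e_\theta$ restricts to the nonzero function $r_{e_{\theta_A}}$. This is how the paper obtains $[v_\ell]=c_\ell J_A$ with $c_\ell\ne0$.

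The quadratic relation for $D_{\ell-2}$ is then a \emph{consequence}, not the image of $v_\ell$. The missing cascade is:
\begin{enumerate}
\item $J_A=0$ kills all $A_1$ currents, by taking zero modes.
\item This kills every $G^{\{u\}}$, because $\g_{-1/2}$ has no $A_1$-invariants and $J^{\{a\}}_{(0)}G^{\{u\}}=G^{\{[a,u]\}}$.
\item A $G$--$G$ bracket with $(e_\theta|[p,q])\ne0$ then expresses $\omega$ through $D_{\ell-2}$ currents. This gives a surjection $V^{4-\ell}(D_{\ell-2})\twoheadrightarrow H_{\mathrm{DS}}(Q_\ell)$.
\item For $u=e_{\varepsilon_3-\varepsilon_2}$ and $v=e_{\varepsilon_3-\varepsilon_1}$ one has $[u,v]=0=[[e_\theta,u],v]$. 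So $G^{\{u\}}_{(0)}G^{\{v\}}$ reduces to the neutral contraction, which equals twice the image of $v_{\ell-2}$. It vanishes because the $G$'s vanish.
\end{enumerate}
Without this cascade, your ``main obstacle'' (a weight-two image of $v_\ell$ matching $v_{\ell-2}$) cannot be resolved as formulated.

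Three secondary points.
\begin{itemize}
\item You do not need $H_{\mathrm{DS}}(Q_\ell)\cong Q_{\ell-2}$, which would require controlling all descendants of $v_\ell$. A surjection $Q_{\ell-2}\twoheadrightarrow H_{\mathrm{DS}}(Q_\ell)$ suffices: the inductive hypothesis makes the source simple, and nonvanishing makes the target nonzero.
\item The nonvanishing condition in your lifting step needs no Zhu-algebra or character argument. The lowest-degree component of any nonzero graded subquotient is a finite-dimensional $\g$-module. It therefore contains an affine highest-weight vector of weight $k\Lambda_0+\mu-s\delta$ with $\mu\in P_+$. Then $\widehat\lambda(\alpha_0^\vee)=k-\mu(\theta^\vee)<0$ automatically, because $k<0$.
\item The check on spinorial weights is unnecessary. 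Parity enters only through where the chain $\ell\to\ell-2$ terminates.
\end{itemize}
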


For comparison, the even-rank presentation of
\cite[\S8.2, Theorem~8.6]{AKMPP} includes two Pfaffian singular vectors.
For even $\ell$, let $B_\ell$ be the skew-symmetric $\ell\times\ell$ matrix
with $(B_\ell)_{ij}=e_{\varepsilon_i+\varepsilon_j}(-1)$ for $i<j$, and set
$w_\ell^+=\operatorname{Pf}(B_\ell)\mathbf1$ and
$w_\ell^-=\tau(w_\ell^+)$.
Here $\operatorname{Pf}$ is the usual Pfaffian and $\tau$ is the diagram
automorphism induced by $\varepsilon_\ell\mapsto-\varepsilon_\ell$, fixing
the other coordinates.  The entries of $B_\ell$ commute, so its Pfaffian
has its ordinary polynomial meaning.  These vectors have affine conformal
degree $\ell/2$ and finite highest weights
$\varepsilon_1+\cdots+\varepsilon_{\ell-1}\pm\varepsilon_\ell$,
respectively.  The two parities can now be stated together.

\begin{corollary}[Parity dichotomy]\label{cor:parity-intro}
For every $\ell\ge4$,
$$
 \operatorname{Rad}V^{2-\ell}(D_\ell)=
 \begin{cases}
   \langle v_\ell\rangle, & \ell\ \text{odd},\\[1mm]
   \langle v_\ell,w_\ell^+,w_\ell^-\rangle, & \ell\ \text{even}.
 \end{cases}
$$
The even line is \cite[Theorem~8.6]{AKMPP}; the odd line is
Theorem~\ref{thm:main}.
\end{corollary}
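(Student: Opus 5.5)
The corollary has two cases, and each is supplied by a result already available. The odd line is Theorem~\ref{thm:main}, applied to every odd $\ell\ge5$. The even line is \cite[Theorem~8.6]{AKMPP}, applied to every even $\ell\ge4$. Since every integer $\ell\ge4$ is either even, or odd with $\ell\ge5$, the two cases together cover all $\ell\ge4$.

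Before citing \cite[Theorem~8.6]{AKMPP} in the even case, I will check that its generators coincide with the $w_\ell^\pm$ defined above. The cited theorem describes its Pfaffian singular vectors in its own conventions. I will match them with ours by comparing three pieces of data. The first is the conformal degree, which is $\ell/2$ on both sides. The second is the finite highest weights, which should be $\varepsilon_1+\cdots+\varepsilon_{\ell-1}\pm\varepsilon_\ell$ on both sides. The third is the normalization of the root vectors, for which I use Section~\ref{sec:conventions}.

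Matching these data suffices for the following reason. The singular vectors of a fixed weight and degree that are generated in this way span a line. So the cited vectors are nonzero scalar multiples of our $w_\ell^\pm$. Rescaling a generator does not change the ideal it generates, so $\langle v_\ell,w_\ell^+,w_\ell^-\rangle$ is the same ideal as the one in the cited theorem.

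The same kind of check is needed for $v_\ell$. Per\v{s}e's vector is normalized by \eqref{eq:vell}, and the vector appearing in \cite{AKMPP} agrees with it up to a nonzero scalar. This again leaves the generated ideal unchanged. Finally, the sign convention for $\tau$ only determines which of the two Pfaffian vectors is labelled $w_\ell^+$ and which $w_\ell^-$. The set $\{w_\ell^+,w_\ell^-\}$ is the same under either convention, so the ideal is unaffected.

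The only real obstacle is this matching of conventions with \cite{AKMPP}. All the mathematical content of the odd case lies in Theorem~\ref{thm:main}, which the corollary simply cites.
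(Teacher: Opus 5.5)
Your proposal is correct and follows the paper's argument. The paper gives no separate proof: the corollary is the conjunction of Theorem~\ref{thm:main} for odd $\ell\ge5$ and \cite[Theorem~8.6]{AKMPP} for even $\ell\ge4$. The convention matching you add goes beyond what the paper does, and you assert the one-dimensionality claim without proof, but it does hold: in degree $\ell/2$, a $\g$-highest-weight vector of weight $2\varpi_\ell$ or $2\varpi_{\ell-1}$ can only come from the all-$(-1)$-mode part $S^{\ell/2}(\g)$, where each of these modules occurs with multiplicity one.
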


The quotients $Q_\ell$ already occur in several parts of the theory.
Per\v{s}e introduced them and studied their representations in
\cite{PerseD}.  Adamovi\'c--Per\v{s}e treated the odd-rank simple algebras
and their fusion rules in \cite{AdamovicPerseFusion}; Arakawa--Moreau
determined the associated varieties of the corresponding simple affine vertex
algebras in \cite[\S9]{ArakawaMoreau}.
The irreducible $Q_\ell$-modules in the category with locally finite
$D_\ell$-action are recalled in \cite[Proposition~8.1]{AKMPP}.
These representation-theoretic and geometric descriptions do not by
themselves exclude a further proper ideal in $Q_\ell$.
Thus the issue is not the existence of the familiar quadratic relation,
but whether that single relation is already complete.
Theorem~\ref{thm:main} supplies the missing algebra presentation: in odd
rank the familiar quadratic quotient is already simple.

The proof uses minimal Drinfeld--Sokolov reduction, denoted by
$H^0_{\mathrm{DS},f_\theta}$, for a minimal nilpotent element $f_\theta$.
The centralizer $\g^\natural$ of its $\mathfrak{sl}_2$-triple is
$A_1\oplus D_{\ell-2}$; the corresponding currents in the reduced algebra
have levels $0$ and $4-\ell$.  We show that the cohomology class of
$v_\ell$ is a nonzero multiple of the positive-root current in the
$A_1$ factor.  Setting this class to zero eliminates that entire factor
and the conformal-weight-$3/2$ generators.  Their product relations then
force precisely the quadratic relation for $D_{\ell-2}$.  This gives a
surjective vertex-algebra homomorphism
\begin{equation}\label{eq:rank-map-intro}
 Q_{\ell-2}\twoheadrightarrow H^0_{\mathrm{DS},f_\theta}(Q_\ell).
\end{equation}
The calculation holds for every $\ell\ge5$, independently of parity.
At our negative levels, minimal reduction detects every nonzero graded
subquotient of the vacuum module.  Exactness therefore lifts simplicity of
the target in \eqref{eq:rank-map-intro} back to $Q_\ell$.
For odd rank, successive minimal reductions reach $Q_3$ at level $-1$, whose simplicity is
the type-$A_3$ case of \cite[Theorem~7.2]{ArakawaMoreau}.

This method is related to the even-rank reduction in \cite[\S8]{JinAM},
through the local quadratic-reduction framework.  The odd-rank
maximal-ideal theorem proved here is logically independent of the
maximal-ideal theorem of \cite{JinAM}: none of the latter theorem's
conclusions is used below.  The contribution here is to isolate the local
quadratic calculation in a form valid in both parities, include the endpoint
$D_5\to D_3$, and combine it with a self-contained lifting argument and the
published $A_3$ base case to prove the odd-rank theorem.

Section~\ref{sec:conventions} fixes notation and normalizations.
Section~\ref{sec:maximality} gives the lifting criterion,
Section~\ref{sec:rank} proves the rank reduction, and
Section~\ref{sec:odd} completes the induction.

\section{Conventions and quadratic quotients}
\label{sec:conventions}

All Lie algebras and vertex algebras are over $\mathbb C$.
For a simple Lie algebra $\g$, fix a Cartan subalgebra $\mathfrak h$,
a set of positive roots, and its highest root $\theta$.
The invariant bilinear form $(\cdot|\cdot)$ is normalized so that long
roots have squared length $2$.  The root space of $\alpha$ is denoted by
$\g_\alpha$.  We write $P_+$ for the dominant integral weights and
$\varpi_i$ for the fundamental weights of the specified root system;
$U(\mathfrak a)$ denotes the universal enveloping algebra of a Lie algebra
$\mathfrak a$.

The untwisted affine Lie algebra, including its degree derivation $d$, is
$$
 \widehat\g=(\g\otimes\mathbb C[t,t^{-1}])\oplus\mathbb CK\oplus\mathbb Cd,
 \qquad [d,a(n)]=n a(n),
$$
where $a(n)=a\otimes t^n$ and $K$ is central.  Its other brackets are
$[a(m),b(n)]=[a,b](m+n)+m\delta_{m+n,0}(a|b)K$, with
$\delta_{m+n,0}$ the Kronecker delta.
The affine Cartan is $\widehat{\mathfrak h}=\mathfrak h\oplus\mathbb CK
\oplus\mathbb Cd$.  Extend finite weights by zero on $K,d$, and normalize
$\Lambda_0$ and the null root $\delta$ by
$\Lambda_0(K)=1$, $\Lambda_0(\mathfrak h\oplus\mathbb Cd)=0$,
$\delta(d)=1$, and $\delta(\mathfrak h\oplus\mathbb CK)=0$.
The additional simple root and coroot are
$\alpha_0=\delta-\theta$ and $\alpha_0^\vee=K-\theta^\vee$.
Write $\widehat Q_+$ for the monoid generated by the affine simple roots.

The vacuum in $V^k(\g)$ has degree zero and $d$ acts as minus the degree.
A mode $a(n)$ lowers degree by $n$.  All homogeneous components of the
vacuum module are finite-dimensional $\g$-modules.  At noncritical level
$k\ne-h^\vee$, this degree agrees with the Sugawara conformal degree.
Every subquotient considered below carries the inherited grading.  We call a
graded module $M$ \emph{lower-bounded} if there exists $s\in\mathbb C$ such
that
$$
 M=\bigoplus_{n\ge0} M[s+n].
$$
A vector is \emph{affine singular} if it is annihilated by
$\g\otimes t\mathbb C[t]$ and by the positive-root zero modes.

For a vertex-algebra state $a$, our conventions are
$$
 Y(a,z)=\sum_{n\in\mathbb Z}a_{(n)}z^{-n-1},\qquad
 [a{}_{\lambda}b]=\sum_{n\ge0}\frac{\lambda^n}{n!}a_{(n)}b,
 \qquad :ab:=a_{(-1)}b.
$$
The symbol $\lambda$ is a formal variable and $\partial$ is the translation
operator.  For a current state $a(-1)\mathbf1$, its vertex mode $a_{(n)}$
is the affine mode $a(n)$.  We use the same symbol for a state and its
field when no confusion can arise.  Strong generation means that normally
ordered products of derivatives of the indicated states span the algebra.

For $D_\ell$, including $D_3\cong A_3$, use positive roots
$\varepsilon_i\pm\varepsilon_j$ with $i<j$.  The simple roots are
$\varepsilon_i-\varepsilon_{i+1}$ for $1\le i<\ell$ and
$\varepsilon_{\ell-1}+\varepsilon_\ell$.
Index matrix rows and columns by $1,\ldots,\ell,-1,\ldots,-\ell$, and let
$E_{a,b}$ be the matrix unit with entry $1$ in position $(a,b)$.
We use
\begin{equation}\label{eq:root-normalization}
 \begin{aligned}
 e_{\varepsilon_i-\varepsilon_j}&=E_{i,j}-E_{-j,-i},&
 e_{\varepsilon_i+\varepsilon_j}&=E_{i,-j}-E_{j,-i}\quad(i<j),\\
 e_{-\alpha}&=-e_\alpha^{\mathsf T}\quad(\alpha>0),&
 (a|b)&=\tfrac12\operatorname{tr}(ab).
 \end{aligned}
\end{equation}
Here $\mathsf T$ denotes matrix transpose.  If $H_i=E_{i,i}-E_{-i,-i}$,
then $\varepsilon_i(H_j)=\delta_{ij}$.
For $\theta=\varepsilon_1+\varepsilon_2$ put
$f_\theta=-e_{-\theta}$ and $h_\theta=H_1+H_2$.
Thus $[e_\theta,f_\theta]=h_\theta$ and $(e_\theta|f_\theta)=1$.
The automorphism $\tau$ in the introduction is induced by conjugation with
the permutation matrix interchanging the coordinate vectors indexed by
$\ell$ and $-\ell$.

Formula \eqref{eq:vell} defines $v_\ell$ and $Q_\ell$ also for $\ell=3$
(verified in Section~\ref{sec:base}).  For $\ell\ge4$,
\cite[Theorem~3.1]{PerseD} checks the affine Chevalley generators; with
\eqref{eq:root-normalization} the finite simple zero modes cancel and
$f_\theta(1)v_\ell=(k_\ell+\ell-2)e_{\varepsilon_1-\varepsilon_2}(-1)\mathbf1=0$
for $k_\ell=2-\ell$, so the same proof applies.

\begin{lemma}\label{lem:proper}
For $\ell\ge5$, the ideal $\langle v_\ell\rangle$ is a proper graded ideal and equals
the affine submodule $U(\widehat{D}_\ell)v_\ell$.
\end{lemma}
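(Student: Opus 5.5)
The proof has two parts: first identify the ideal $\langle v_\ell\rangle$ with the affine submodule $U(\widehat D_\ell)v_\ell$, then show that this submodule misses the vacuum.

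For the identification, I will use the fact, valid at noncritical level, that $v_\ell$ is an affine singular vector. By the preceding discussion of \cite[Theorem~3.1]{PerseD}, it is annihilated by $\g\otimes t\mathbb C[t]$ and by the positive-root zero modes. It has degree $2$, and its $\mathfrak h$-weight is $2\varepsilon_1$. Put $M=U(\widehat D_\ell)v_\ell$. This is a $\widehat{D}_\ell$-submodule of $V^{2-\ell}(D_\ell)$.

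I claim $M$ is already an ideal, by the standard argument. The vertex algebra $V^k(\g)$ is strongly generated by the currents $a(-1)\mathbf1$. The modes of these currents are the affine modes $a(n)$, and $M$ is stable under them. By Borcherds' identity (equivalently, the iterate/associativity formula), the modes of a normally ordered product $:\!a b\!:$ are infinite sums of products of modes of $a$ and $b$. Each such sum acts locally finitely on every vector, because the module is lower-bounded. Translation $\partial$ is $L_{-1}$, the Sugawara operator, which is again a quadratic expression in currents. Hence every vertex operator $Y(u,z)$ with $u\in V^k(\g)$ preserves $M$. Thus $M$ is a left ideal. Skew-symmetry $Y(b,z)a=e^{z\partial}Y(a,-z)b$ together with $\partial M\subset M$ shows it is a two-sided ideal. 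It contains $v_\ell$, so $\langle v_\ell\rangle\subseteq M$. Conversely $M\subseteq\langle v_\ell\rangle$, since $a(n)=a_{(n)}$ for the current state $a(-1)\mathbf1$. So $\langle v_\ell\rangle=M$.

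For gradedness and properness I use the PBW description. Since $v_\ell$ is singular, $M=U(\widehat{\mathfrak n}_-)v_\ell$, where $\widehat{\mathfrak n}_-$ is spanned by $\g\otimes t^{-1}\mathbb C[t^{-1}]$ and the negative-root zero modes. These operators are homogeneous and lower $d$ (they raise degree) or keep it. Hence $M$ is graded, and every homogeneous component of $M$ has degree $\ge2$. In particular $M[0]=0$, while $V^{2-\ell}(D_\ell)[0]=\mathbb C\mathbf1$. So $\mathbf1\notin M$ and the ideal is proper. Two further points are needed here: $v_\ell\neq0$, which holds because it is a nonzero sum of distinct PBW monomials, and the level is noncritical, which holds because $2-\ell\neq-(2\ell-2)$ for $\ell\ge5$.

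The main obstacle is the careful justification that $M$ is closed under all vertex operators, not just current modes. I expect to handle it via the standard lemma that, in a vertex algebra strongly generated by a set $S$, a subspace stable under all modes of elements of $S$ and under $\partial$ is an ideal. This follows from the normal-ordered-product mode formula by induction on PBW length. Everything else is immediate from the grading.
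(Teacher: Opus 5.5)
Your proposal is correct and follows the paper's proof essentially step for step. The Sugawara operator $L_{-1}=T$ at the noncritical level $k+h^\vee=\ell$ gives $T$-stability, and strong generation by the currents together with the normal-ordered-product mode formulas upgrades current-mode stability to a vertex ideal; your skew-symmetry remark only makes the two-sidedness explicit. The PBW decomposition $M=U(\widehat{\mathfrak n}_-)v_\ell$ for the singular vector then puts $M$ in degrees $\ge2$ and excludes $\mathbf 1$. One small slip in phrasing: singularity of $v_\ell$ holds at the specific level $2-\ell$, not at every ``noncritical level''; noncriticality is used in the Sugawara step, not in the properness step where you placed it.
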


\begin{proof}
Let $M_\ell=U(\widehat{D}_\ell)v_\ell$.  By construction $M_\ell$ is graded
and stable under every current mode.  Since $M_\ell$ is a restricted affine
module and $k+h^\vee=(2-\ell)+(2\ell-2)=\ell\ne0$, the Sugawara formula
for $L_{-1}=T$ is locally finite on each vector and is a sum of compositions
of current modes.  Hence $T M_\ell\subset M_\ell$.

The currents strongly generate $V^{2-\ell}(D_\ell)$; the derivative and
normal-product mode formulas therefore show inductively that current-mode and
$T$-stability imply stability under every state mode.  Thus $M_\ell$ is a
vertex ideal.  Conversely every vertex ideal containing $v_\ell$ is
current-mode stable, so
$$
 M_\ell=\langle v_\ell\rangle.
$$

It remains to see that this ideal is proper.  Each summand of
\eqref{eq:vell} has finite weight $2\varepsilon_1$, and $v_\ell$ has degree
$2$; together with its affine singularity this makes it a highest-weight
vector of affine weight $(2-\ell)\Lambda_0+2\varepsilon_1-2\delta$.
The Poincar\'e--Birkhoff--Witt theorem therefore gives
$$
 M_\ell
 =U\bigl((D_\ell\otimes t^{-1}\mathbb C[t^{-1}])
       \oplus\mathfrak n_-\bigr)v_\ell,
$$
where $\mathfrak n_-\subset D_\ell$ acts by zero modes.  Negative loop
modes strictly increase degree, while these zero modes preserve it.  Hence
every vector of $M_\ell$ has degree at least $2$, so $\mathbf1\notin M_\ell$
and $\langle v_\ell\rangle$ is proper.
\end{proof}

\section{A minimal-reduction lifting criterion}
\label{sec:maximality}

Let $\g$ be simple and choose a minimal $\mathfrak{sl}_2$-triple
$(e_\theta,h_\theta,f_\theta)$ with $h_\theta=\theta^\vee$.
Put $x_\theta=h_\theta/2$ and
$\g_j=\{a\in\g:[x_\theta,a]=ja\}$.
The minimal grading and the centralizer of the triple are
$$
 \g=\g_{-1}\oplus\g_{-1/2}\oplus\g_0\oplus\g_{1/2}\oplus\g_1,
 \qquad
 \g^\natural=\{a\in\g:[a,e_\theta]=[a,h_\theta]=[a,f_\theta]=0\}.
$$
In particular $\g_0=\g^\natural\oplus\mathbb Cx_\theta$ is an orthogonal
decomposition.  Write $a^\natural$ for the projection of $a\in\g_0$ onto
$\g^\natural$, and set $\mathfrak h^\natural=\mathfrak h\cap\g^\natural$.

For a level-$k$ affine module $M$, minimal Drinfeld--Sokolov reduction is
the degree-zero cohomology of the standard BRST complex
$$
 C_{\mathrm{DS}}(M)=M\otimes F_{\mathrm{ch}}\otimes F_{\mathrm{ne}},
 \qquad
 H_{\mathrm{DS}}(M):=H^0(C_{\mathrm{DS}}(M),d_{\mathrm{BRST}})
 =H^0_{\mathrm{DS},f_\theta}(M).
$$
Here $F_{\mathrm{ch}}$ and $F_{\mathrm{ne}}$ are the charged and neutral
ghost vertex algebras for $\g_{>0}=\g_{1/2}\oplus\g_1$ and $\g_{1/2}$,
respectively, and $d_{\mathrm{BRST}}$ is the square-zero differential
of \cite[\S1]{KacWakimoto2004}.  Their tensor-product vacuum is
$\mathbf1_{\mathrm{gh}}$.  A vector annihilated by $d_{\mathrm{BRST}}$ is
called BRST closed; its cohomology class is taken modulo the image of
$d_{\mathrm{BRST}}$.  The cohomological degree here is distinct from
conformal degree.  The universal minimal $W$-algebra is
$W^k(\g,f_\theta)=H_{\mathrm{DS}}(V^k(\g))$.

Throughout this section assume $k\in\mathbb Z_{<0}$ and $k\ne-h^\vee$.
Following \cite[\S2.12]{Arakawa2005}, let $\cO_k$ be the full category of
level-$k$ $\widehat\g$-modules $M=\bigoplus_{\lambda(K)=k}M_\lambda$ with
$\dim M_\lambda<\infty$ and $\operatorname{Supp}M\subset
\bigcup_{i=1}^r(\mu_i-\widehat Q_+)$ for some finite set $\{\mu_i\}$.
For the minimal reduction $f=f_\theta$, Theorem~6.7.1 and
Corollary~6.7.3 of \cite{Arakawa2005} give exactness of $H_{\mathrm{DS}}$
on $\cO_k$, while Theorem~6.7.4 gives, for irreducible
$L(\widehat\lambda)\in\cO_k$,
$$
 H_{\mathrm{DS}}(L(\widehat\lambda))\ne0
 \quad\Longleftrightarrow\quad
 \widehat\lambda(\alpha_0^\vee)\notin\mathbb Z_{\ge0}.
$$
The lifting argument below is a consequence of these results; compare
\cite[\S7]{ArakawaMoreau} and \cite[Theorem~2.11]{JinAM}.

\begin{lemma}[Exactness category]\label{lem:exactness-category}
Let $M$ be a lower-bounded graded subquotient of $V^k(\g)$ such that
$$
 M=\bigoplus_{n\ge0}M[s+n],
 \qquad \dim M[s+n]<\infty,
$$
and every homogeneous component is a finite-dimensional $\g$-module.  Then
$M\in\cO_k$.  In particular, minimal Drinfeld--Sokolov reduction is exact on
short exact sequences of such modules.
\end{lemma}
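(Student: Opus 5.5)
We verify the defining conditions of $\cO_k$ one by one for $M$, and then invoke the exactness results of \cite{Arakawa2005} recalled above.

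\emph{Module structure and level.} Since $M=N_1/N_2$ with $N_2\subset N_1\subset V^k(\g)$ graded $\widehat\g$-submodules, $M$ is a $\widehat\g$-module. $K$ acts by $k$ on $V^k(\g)$, hence on $M$. We let $d$ act on $M[s+n]$ by $-(s+n)$; this is compatible with $[d,a(n)]=na(n)$ because $a(n)$ lowers degree by $n$.

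\emph{Weight decomposition.} Each $M[s+n]$ is a finite-dimensional $\g$-module, so $\mathfrak h$ acts semisimply on it. Hence $\widehat{\mathfrak h}$ acts semisimply on $M$. The weight spaces are
$$
 M_{\widehat\lambda}=M[s+n]_{\bar\lambda},\qquad \widehat\lambda=k\Lambda_0+\bar\lambda-(s+n)\delta .
$$
Each is finite-dimensional, since $\dim M[s+n]<\infty$.

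\emph{Support condition.} This is the one point needing care. The finite weights occurring in $M[s+n]$ are weights of a finite-dimensional $\g$-module. They therefore lie in $P$, in the convex hull of $W$-orbits of dominant weights. We must bound $\operatorname{Supp}M$ inside finitely many cones $\mu_i-\widehat Q_+$, uniformly in $n$.

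First we bound the finite weights. Each $M[s+n]$ is a subquotient of $V^k(\g)[m]$ with $m=s+n\in\mathbb Z_{\ge0}$. The finite weights of $V^k(\g)[m]$ are sums of at most $m$ roots. Hence every finite weight $\bar\lambda$ in $M[s+n]$ satisfies $(\bar\lambda|\theta)\le m\,(\theta|\theta)=2m$, and more precisely $\bar\lambda\le m\theta$ in the dominance order.

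Next we pass to affine weights. Writing $m=s+n$ and using $\delta-\theta=\alpha_0\in\widehat Q_+$, we get
$$
 k\Lambda_0+\bar\lambda-m\delta = k\Lambda_0-s'\delta+(\bar\lambda-m\theta)-m(\delta-\theta)
$$
up to the fixed shift $s'=0$ (take $s$ itself as the minimal degree). Here $\bar\lambda-m\theta\in-Q_+$, since $\bar\lambda$ is a weight of $V^k(\g)[m]$ and hence lies below $m\theta$. Also $m(\delta-\theta)\in\widehat Q_+$ because $m\ge0$.

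Consequently every weight of $M$ lies in the single cone $k\Lambda_0-\widehat Q_+$, and we may take $\{\mu_i\}=\{k\Lambda_0\}$. The inequality $\bar\lambda\le m\theta$ needs a check: any weight of $V^k(\g)[m]$ is a sum of at most $m$ roots of $\g$, each of which is $\le\theta$, together with zero weights. This is where I expect the only subtlety, and I would verify it directly from the PBW basis of $V^k(\g)$, which consists of monomials in the $a(-j)$, $j\ge1$, so that the number of factors is at most the degree.

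\emph{Conclusion.} Thus $M\in\cO_k$. Exactness of $H_{\mathrm{DS}}$ on $\cO_k$ (Theorem~6.7.1 and Corollary~6.7.3 of \cite{Arakawa2005}) then applies to any short exact sequence of such modules. All three terms of such a sequence are again lower-bounded graded subquotients of $V^k(\g)$ with finite-dimensional $\g$-stable components, so each lies in $\cO_k$ by the argument above.
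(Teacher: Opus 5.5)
Your proposal is correct and takes essentially the same route as the paper: $\widehat{\mathfrak h}$ acts semisimply because of the grading and the finite-dimensional $\g$-components, a PBW weight count puts $\operatorname{Supp}M$ inside the single cone $k\Lambda_0-\widehat Q_+$, and exactness then follows from Arakawa's results. The only difference is bookkeeping. The paper writes each factor's contribution $m_i\delta-\alpha_i$ as a positive affine root, while you bound the total finite weight by $m\theta$ and absorb the rest with $m\alpha_0=m(\delta-\theta)$; the shift $s'$ in your display is unnecessary, since the identity holds exactly.
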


\begin{proof}
The affine Cartan acts semisimply: $K$ acts as $k$, the grading diagonalizes
$d$, and each finite-dimensional homogeneous component is semisimple over the
finite Cartan.  All affine weight spaces are therefore finite-dimensional.
Moreover, $V^k(\g)$ is spanned by PBW monomials
$$
 x_1(-m_1)\cdots x_s(-m_s)\mathbf1,
 \qquad m_i>0,
$$
where each $x_i$ is either a root vector in $\g_{\alpha_i}$ or a Cartan
vector.  In the root-vector case the affine weight contribution is
$\alpha_i-m_i\delta$, so the monomial has affine weight
$$
 k\Lambda_0-\sum_i(m_i\delta-\alpha_i),
$$
with the Cartan contribution interpreted as $-m_i\delta$.  For any finite
root $\alpha$ and $m>0$, $m\delta-\alpha$ is a positive real affine root:
if $\alpha\in\Delta_+$ it is $-\alpha+m\delta$, while if
$\alpha\in\Delta_-$ it is $(-\alpha)+m\delta$.  For a Cartan vector the
contribution is the positive imaginary root $m_i\delta$.  Hence
$$
 \operatorname{Supp}_{\widehat{\mathfrak h}}V^k(\g)
 \subset k\Lambda_0-\widehat Q_+.
$$
Submodules and quotients cannot create new weights, so the same inclusion
holds for $M$.  This is the required support condition for $\cO_k$.
Exactness now follows from \cite[Corollary~6.7.3]{Arakawa2005}.
\end{proof}

\begin{lemma}\label{lem:detection}
Let $k\in\mathbb Z_{<0}$ with $k\ne-h^\vee$, and let $M\neq0$ be a
lower-bounded graded subquotient of $V^k(\g)$ such that each homogeneous
component is a finite-dimensional $\g$-module.  Then
$H_{\mathrm{DS}}(M)\neq0$.
\end{lemma}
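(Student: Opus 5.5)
The plan is to reduce to an irreducible subquotient and apply Arakawa's vanishing criterion (Theorem~6.7.4 of \cite{Arakawa2005}) together with exactness from Lemma~\ref{lem:exactness-category}.

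Since $M\neq0$ is lower-bounded with finite-dimensional homogeneous components, pick the lowest degree $s+n_0$ with $M[s+n_0]\neq0$. Inside this finite-dimensional $\g$-module choose a highest-weight vector $u$ of some weight $\lambda$ for $\mathfrak n_+$. Because $u$ has minimal degree, all positive loop modes kill it. Hence $u$ is affine singular of affine weight $\widehat\lambda=k\Lambda_0+\lambda-m\delta$, with $\lambda\in P_+$ because $M[s+n_0]$ is a finite-dimensional $\g$-module.

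The submodule $N=U(\widehat\g)u$ is a highest-weight module, so it has a unique maximal submodule $N'$ and $N/N'\cong L(\widehat\lambda)$. Both $N$ and $N'$ are subquotients of $V^k(\g)$ satisfying the hypotheses of Lemma~\ref{lem:exactness-category}, so everything lies in $\cO_k$ and $H_{\mathrm{DS}}$ is exact there. From the inclusion $N\hookrightarrow M$ and the surjection $N\twoheadrightarrow L(\widehat\lambda)$, exactness gives an injection $H_{\mathrm{DS}}(N)\hookrightarrow H_{\mathrm{DS}}(M)$ and a surjection $H_{\mathrm{DS}}(N)\twoheadrightarrow H_{\mathrm{DS}}(L(\widehat\lambda))$. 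It therefore suffices to show $H_{\mathrm{DS}}(L(\widehat\lambda))\neq0$.

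By Theorem~6.7.4 this holds iff $\widehat\lambda(\alpha_0^\vee)\notin\mathbb Z_{\ge0}$. We compute
$$\widehat\lambda(\alpha_0^\vee)=\widehat\lambda(K-\theta^\vee)=k-(\lambda|\theta^\vee)=k-(\lambda|\theta).$$
Dominance of $\lambda$ gives $(\lambda|\theta)\in\mathbb Z_{\ge0}$, and $k\in\mathbb Z_{<0}$, so $\widehat\lambda(\alpha_0^\vee)\le k<0$. Thus it is not in $\mathbb Z_{\ge0}$, and $H_{\mathrm{DS}}(M)\neq0$.

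The main point to check is that the lowest-degree highest-weight vector gives an honest highest-weight module whose irreducible quotient belongs to $\cO_k$. Both facts are immediate from the support bound in Lemma~\ref{lem:exactness-category}. The sign computation uses $\theta^\vee=\theta$, which holds because long roots have squared length $2$.
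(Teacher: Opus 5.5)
Your proposal is correct and follows essentially the same route as the paper's proof. A lowest-degree $\mathfrak n_+$-highest-weight vector generates a highest-weight submodule $U(\widehat\g)u\subset M$ in $\cO_k$, and Arakawa's criterion applies because $\widehat\lambda(\alpha_0^\vee)=k-\lambda(\theta^\vee)<0$. Exactness then carries nonvanishing from $L(\widehat\lambda)$ to $U(\widehat\g)u$ and on to $M$, exactly as in the paper. The paper additionally spells out that the kernel $N'$ and the quotient $M/N$ are graded subquotients in $\cO_k$, which is needed for the two short exact sequences; your phrase ``everything lies in $\cO_k$'' covers this.
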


\begin{proof}
Choose $s_0$ with $M=\bigoplus_{n\ge0}M[s_0+n]$.  Since $M\ne0$,
there is a least $n_0\ge0$ such that $M[s_0+n_0]\ne0$; put
$s=s_0+n_0$.  Then $M[s-n]=0$ for every integer $n>0$.  Since $\g$ is
semisimple, the finite-dimensional $\g$-module $M[s]$ is completely
reducible.  Choose a highest-weight vector $v$ in an irreducible summand,
with highest weight $\mu\in P_+$.  For $a\in\g$ and $n>0$, one has
$a(n)v\in M[s-n]=0$; positive-root zero modes kill $v$ by construction.
Thus $v$ is an affine highest-weight vector of weight
$\widehat\lambda=k\Lambda_0+\mu-s\delta$ (indeed $dv=-sv$).
Its cyclic module $C=U(\widehat\g)v$ is a quotient of the Verma module
$M(\widehat\lambda)$ and hence has the irreducible quotient
$L(\widehat\lambda)$.  Since $C\subset M$ is $d$-stable, it is graded with
$C[r]\subset M[r]$, hence lower-bounded with finite-dimensional homogeneous
$\g$-components.  The quotient map is $\widehat\g$-linear, so it preserves
both the level $k$ and the $d$-grading; its kernel and quotient have the same
properties.  Thus Lemma~\ref{lem:exactness-category} gives
$C,L(\widehat\lambda)\in\cO_k$.
Since $\delta(\alpha_0^\vee)=0$, while $\mu$ is dominant integral and
$\theta^\vee$ is a positive coroot, one has
$\widehat\lambda(\alpha_0^\vee)=k-\mu(\theta^\vee)<0$.
Arakawa's nonvanishing criterion therefore gives
$H_{\mathrm{DS}}(L(\widehat\lambda))\ne0$.
If $K=\ker(C\twoheadrightarrow L(\widehat\lambda))$, then $K[r]\subset C[r]$
and $(M/C)[r]=M[r]/C[r]$.  Hence $K$ and $M/C$ are again lower-bounded graded
subquotients with finite-dimensional homogeneous $\g$-components, so
Lemma~\ref{lem:exactness-category} applies to
$0\to K\to C\to L(\widehat\lambda)\to0$ and
$0\to C\to M\to M/C\to0$.  Exactness therefore gives a surjection
$H_{\mathrm{DS}}(C)\twoheadrightarrow H_{\mathrm{DS}}(L(\widehat\lambda))$
and an injection $H_{\mathrm{DS}}(C)\hookrightarrow H_{\mathrm{DS}}(M)$.
The first makes $H_{\mathrm{DS}}(C)$ nonzero, and the second proves the claim.
No finite-length assumption on $M$ is used.
\end{proof}

\begin{proposition}[Minimal-reduction maximality principle]
\label{prop:maximality}
Let $k\in\mathbb Z_{<0}$ with $k\ne-h^\vee$.  Let
$N\subset\Rad V^k(\g)$ be a graded ideal and put $Q=V^k(\g)/N$.
If $H_{\mathrm{DS}}(Q)$ is nonzero and simple, then
$Q\cong L_k(\g)$ and $N=\Rad V^k(\g)$.
\end{proposition}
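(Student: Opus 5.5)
The plan is to argue by contradiction using the short exact sequence $0\to R\to Q\to L_k(\g)\to 0$, where $R=\Rad V^k(\g)/N$. Since $N\subset\Rad V^k(\g)$, the canonical surjection $V^k(\g)\twoheadrightarrow L_k(\g)$ factors through $Q$, and its kernel is exactly $R$. All three modules are graded subquotients of $V^k(\g)$. Each has finite-dimensional homogeneous components and lies in nonnegative degrees, so each is lower-bounded with $s=0$, and each component is a finite-dimensional $\g$-module. Hence Lemma~\ref{lem:exactness-category} applies, and exactness of $H_{\mathrm{DS}}$ gives a short exact sequence
$$
0\to H_{\mathrm{DS}}(R)\to H_{\mathrm{DS}}(Q)\to H_{\mathrm{DS}}(L_k(\g))\to 0 .
$$

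Next I would check that the maps are compatible with the vertex-algebra structure. $H_{\mathrm{DS}}$ is functorial for vertex-algebra homomorphisms, so $H_{\mathrm{DS}}(Q)\to H_{\mathrm{DS}}(L_k(\g))$ is a vertex-algebra homomorphism. Likewise, since $R$ is an ideal of $Q$, the image of $H_{\mathrm{DS}}(R)$ is an ideal of the vertex algebra $H_{\mathrm{DS}}(Q)$: the BRST complex of $R$ is an ideal in the BRST complex of $Q$, and cohomology classes multiply compatibly. By simplicity of $H_{\mathrm{DS}}(Q)$, this ideal is either $0$ or everything.

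The main step is to rule out both alternatives unless $R=0$.

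First, $L_k(\g)\neq 0$ is a nonzero lower-bounded graded subquotient, so Lemma~\ref{lem:detection} gives $H_{\mathrm{DS}}(L_k(\g))\ne0$. Thus $H_{\mathrm{DS}}(R)$ cannot be all of $H_{\mathrm{DS}}(Q)$. A cleaner route: the class of $\mathbf1\otimes\mathbf1_{\mathrm{gh}}$ lies in degree $0$ and maps to the nonzero vacuum of $H_{\mathrm{DS}}(L_k(\g))$. Moreover $R$ has no degree-$0$ component, and the image of $H_{\mathrm{DS}}(R)$ lies in conformal degrees where it cannot contain the vacuum; the surjectivity argument above also suffices.

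So $H_{\mathrm{DS}}(R)=0$. If $R\ne0$, then $R$ is a nonzero lower-bounded graded subquotient of $V^k(\g)$ with finite-dimensional $\g$-components, so Lemma~\ref{lem:detection} gives $H_{\mathrm{DS}}(R)\neq0$, a contradiction. Hence $R=0$, so $N=\Rad V^k(\g)$ and $Q\cong L_k(\g)$.

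I expect the delicate point to be justifying that $H_{\mathrm{DS}}(R)$ is an ideal in $H_{\mathrm{DS}}(Q)$, i.e.\ that the exactness morphism is the natural map of vertex algebras and modules. I would handle it directly: $C_{\mathrm{DS}}(R)=R\otimes F_{\mathrm{ch}}\otimes F_{\mathrm{ne}}$ is a $d_{\mathrm{BRST}}$-stable ideal of $C_{\mathrm{DS}}(Q)$, and the products of a closed element with closed elements remain closed in $C_{\mathrm{DS}}(R)$. Alternatively, one can avoid ideals altogether. The surjection $H_{\mathrm{DS}}(Q)\to H_{\mathrm{DS}}(L_k(\g))$ is a vertex-algebra map between nonzero vertex algebras, and its kernel is an ideal of a simple vertex algebra. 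That kernel is not everything because the target is nonzero, so it is zero; hence $H_{\mathrm{DS}}(R)=0$ and Lemma~\ref{lem:detection} concludes.
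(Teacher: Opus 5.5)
Your proof is correct and is essentially the paper's argument: exactness from Lemma~\ref{lem:exactness-category} gives $0\to H_{\mathrm{DS}}(R)\to H_{\mathrm{DS}}(Q)\to H_{\mathrm{DS}}(L_k(\g))\to0$, and Lemma~\ref{lem:detection} makes the target nonzero. Simplicity of $H_{\mathrm{DS}}(Q)$ then kills the kernel of the vertex-algebra surjection, so $H_{\mathrm{DS}}(R)=0$; your ``alternative'' at the end is exactly the paper's route. A second application of Lemma~\ref{lem:detection} then forces $R=0$. The degree-based ``cleaner route'' aside is unjustified as stated, because reduced conformal degree is not affine degree (e.g.\ $e_\theta(-1)\mathbf1$ has reduced degree $1-\theta(x_\theta)=0$); nothing in your proof depends on it, so you can simply drop it.
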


\begin{proof}
Because $N\subset\Rad V^k(\g)=\ker(V^k(\g)\to L_k(\g))$, the quotient map
factors to $q:Q\twoheadrightarrow L_k(\g)$, with
$I:=\ker q=\Rad V^k(\g)/N$.  Since $Q$ is a graded quotient of
$V^k(\g)$, it is lower-bounded of level $k$ with finite-dimensional
homogeneous $\g$-components; the same holds for the graded subquotients
$I$ and $L_k(\g)$.  Lemma~\ref{lem:exactness-category} therefore applies,
and reduction gives
$$
 0\longrightarrow H_{\mathrm{DS}}(I)
 \longrightarrow H_{\mathrm{DS}}(Q)
 \longrightarrow H_{\mathrm{DS}}(L_k(\g))\longrightarrow0.
$$
The last arrow is the surjective vertex-algebra homomorphism induced by
$q\otimes\mathrm{id}$.  Since $L_k(\g)$ is a nonzero graded quotient of
$V^k(\g)$, Lemma~\ref{lem:detection} makes its target nonzero; hence its
kernel is a proper ideal of the simple algebra $H_{\mathrm{DS}}(Q)$ and is
zero.  Exactness gives $H_{\mathrm{DS}}(I)=0$.  If $I\ne0$, then, as the
graded subquotient above, Lemma~\ref{lem:detection} would give
$H_{\mathrm{DS}}(I)\ne0$, a contradiction.
  Hence $I=0$, so
$N=\Rad V^k(\g)$ and $Q\cong L_k(\g)$.
\end{proof}

\section{A parity-free local reduction in type \texorpdfstring{$D$}{D}}
\label{sec:rank}

Fix $\ell\ge5$, put $\g_\ell=D_\ell$ and $k_\ell=2-\ell$, and retain the
root vectors, $\theta=\varepsilon_1+\varepsilon_2$, and $f_\theta$ from
Section~\ref{sec:conventions}.  Set
$W_\ell=W^{k_\ell}(D_\ell,f_\theta)$ and
$R_\ell=H_{\mathrm{DS}}(Q_\ell)$.
Lemmas~\ref{lem:proper} and~\ref{lem:detection} give $R_\ell\ne0$.

The centralizer is $\g_\ell^\natural\cong A_1\oplus D_{\ell-2}$.
The $A_1$ root is $\theta_A=\varepsilon_1-\varepsilon_2$, and the
$D_{\ell-2}$ factor uses the coordinates $\varepsilon_3,\ldots,
\varepsilon_\ell$.
As a module for this direct sum, $(\g_\ell)_{-1/2}$ is the tensor product
of the two-dimensional standard $A_1$-module and the
$(2\ell-4)$-dimensional vector representation of $D_{\ell-2}$.

Since $k_\ell+h^\vee(D_\ell)=\ell\ne0$, the noncritical universal
$W_\ell$ is freely generated by currents $J^{\{a\}}$ for
$a\in\g_\ell^\natural$, generators $G^{\{u\}}$ for
$u\in(\g_\ell)_{-1/2}$, and $\omega$, of conformal weights $1$, $3/2$,
and $2$ \cite[p.~2 and Theorem~2.1]{AKMPPStructural}.  In particular,
$W_\ell[1]=J^{\{\g_\ell^\natural\}}$.  Write
$J_A=J^{\{e_{\theta_A}\}}$.
For a simple factor $\mathfrak s\subset\g_\ell^\natural$,
\cite[Theorem~2.1]{AKMPPStructural} gives level
$k_\ell+(h^\vee(D_\ell)-h^\vee_{0,\mathfrak s})/2$, with
$h^\vee_{0,\mathfrak s}$ computed for the restricted form.  Since
$(H_i|H_j)=\delta_{ij}$, both factors retain the standard normalization
(root length squared $2$), so $h^\vee_{0,A_1}=2$ and
$h^\vee_{0,D_{\ell-2}}=2\ell-6$.  Hence
$k^\natural_{A_1}=0$ and
$k^\natural_{D_{\ell-2}}=4-\ell=k_{\ell-2}$.
In \cite[\S2]{AKMPPStructural}, the root vectors satisfy
$[e_\theta^{\rm AKMPP},e_{-\theta}^{\rm AKMPP}]=x_\theta$.  Since our
normalization gives $[e_\theta,f_\theta]=h_\theta=2x_\theta$, their pair is
$e_\theta^{\rm AKMPP}=\tfrac12e_\theta$, $e_{-\theta}^{\rm AKMPP}=f_\theta$.
Thus below we write $e:=\tfrac12e_\theta$ wherever their $e_\theta$ occurs;
the neutral form is $(f_\theta|[\cdot,\cdot])$.

\begin{proposition}[Local reduction]\label{prop:local-reduction}
For every $\ell\ge5$ the following statements hold.
\begin{enumerate}[label=\textup{(\roman*)},leftmargin=2.2em]
\item In $W_\ell$, the class
$[v_\ell]_{\mathrm{DS}}:=[v_\ell\otimes\mathbf1_{\mathrm{gh}}]$ satisfies
$[v_\ell]_{\mathrm{DS}}=c_\ell J_A$ for some
$c_\ell\in\mathbb C^\times$.  Consequently $J_A=0$ in $R_\ell$.
\item The algebra $R_\ell$ is strongly generated by the surviving
$D_{\ell-2}$ currents.  In particular there is a surjective homomorphism
\begin{equation}\label{eq:universal-surj}
 \psi_\ell:V^{4-\ell}(D_{\ell-2})\twoheadrightarrow R_\ell.
\end{equation}
\item The next quadratic relation lies in the kernel:
$\psi_\ell(v_{\ell-2})=0$.
\end{enumerate}
\end{proposition}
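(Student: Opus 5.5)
For (i), I would work directly in the BRST complex $C_{\mathrm{DS}}(V^{k_\ell}(D_\ell))$ and use the explicit Kac--Wakimoto description of the weight-one generators. For $a\in\g^\natural$, $J^{\{a\}}$ is $a(-1)\mathbf1$ plus a quadratic correction in the neutral and charged ghosts. In conformal weight one, the BRST cohomology is exactly $J^{\{\g^\natural\}}$. Conformal weight is now measured by the shifted Virasoro field $L+\partial x_\theta$. The summands of $v_\ell$ have weight $2\varepsilon_1$, and $(2\varepsilon_1)(x_\theta)=1$, so $v_\ell\otimes\mathbf1_{\mathrm{gh}}$ has shifted weight $2-1=1$.

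First I would check that $v_\ell\otimes\mathbf1_{\mathrm{gh}}$ is BRST closed. The differential acts on $v\otimes\mathbf1_{\mathrm{gh}}$ through the modes $u(n)$ for $u\in\g_{>0}$ with $n\ge0$, and through the constraint $(f_\theta|u)$ and the neutral-ghost terms. Positive modes kill $v_\ell$ because it is singular. Among the zero modes, those from $\g_{>0}$ either kill it or raise the weight outside the support. The constraint term is $f_\theta(1)$-type and also vanishes by singularity. So the class $[v_\ell]_{\mathrm{DS}}$ lies in $W_\ell[1]=J^{\{\g^\natural\}}$.

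It has $\mathfrak h^\natural$-weight equal to the restriction of $2\varepsilon_1$. On $\mathfrak h^\natural$, spanned by $H_1-H_2,H_3,\dots,H_\ell$, this restriction is $\varepsilon_1-\varepsilon_2=\theta_A$. Hence $[v_\ell]_{\mathrm{DS}}=c_\ell J_A$. To get $c_\ell\ne0$, I would compute the image of $v_\ell$ under the Kac--Wakimoto/Arakawa projection to $V^{\tau}(\g_0)$, the Miura-type map. Equivalently, I would reduce each product $e_{\varepsilon_1-\varepsilon_i}(-1)e_{\varepsilon_1+\varepsilon_i}(-1)$ modulo BRST-exact terms. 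The term $i=2$ contributes $e_{\varepsilon_1-\varepsilon_2}(-1)$ multiplied by the constant $(f_\theta|e_\theta)$ coming from $e_\theta(-1)\sim$ the constraint. The terms with $i\ge3$ give products of $\g_{1/2}$ currents, which become neutral-ghost bilinears, and these produce further multiples of $J_A$. This explicit constant computation is the main obstacle. I would carry it out by pairing $[v_\ell]$ against $J^{\{f_{\theta_A}\}}$ through the $\lambda$-bracket $[J^{\{f_{\theta_A}\}}{}_\lambda\,\cdot\,]$ at order $\lambda^1$ to extract $c_\ell$, expecting a value like a nonzero multiple of $\ell-\ell/2$ or similar. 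Exactness of $H_{\mathrm{DS}}$ on $0\to\langle v_\ell\rangle\to V\to Q_\ell\to0$ (Lemma~\ref{lem:exactness-category}) then gives $J_A=0$ in $R_\ell$.

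For (ii), I would use the $\lambda$-brackets of \cite[Theorem~2.1]{AKMPPStructural}. Since $J_A=0$, applying $J^{\{f_{\theta_A}\}}{}_\lambda$ kills $J^{\{h_{\theta_A}\}}$ up to the level term, which is zero because $k^\natural_{A_1}=0$. Hence $J^{\{h_{\theta_A}\}}$ and $J^{\{f_{\theta_A}\}}$ vanish too. Next, $[J_A{}_\lambda G^{\{u\}}]=G^{\{[e_{\theta_A},u]\}}$, so $G^{\{u\}}=0$ for $u$ in the image of $e_{\theta_A}$. Applying $f_{\theta_A}$ then kills all $G$'s. Finally, $[G^{\{u\}}{}_\lambda G^{\{v\}}]$ contains $-2(k+h^\vee)(e|[u,v])\,\omega$ plus current terms. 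Setting this to zero expresses $\omega$ as the Sugawara combination of surviving $D_{\ell-2}$ currents, since $k+h^\vee=\ell\ne0$. Strong generation of $W_\ell$ then gives (ii), with $\psi_\ell$ defined from the level $k^\natural_{D_{\ell-2}}=4-\ell$.

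For (iii), I would choose $u,v\in(\g_\ell)_{-1/2}$ of $D_{\ell-2}$-weights $\varepsilon_3-\varepsilon_i$ and $\varepsilon_3+\varepsilon_i$, with matching $A_1$ components. The vanishing of the $\lambda^0$ term of $[G^{\{u\}}{}_\lambda G^{\{v\}}]$ then contains the quadratic normally ordered term $\sum_\alpha :J^{\{[u,u^\alpha]^\natural\}}J^{\{[u_\alpha,v]^\natural\}}:$. Summing over $i$ and simplifying, after removing $A_1$ currents and the now-vanishing $\omega$ contribution, should give a nonzero multiple of $v_{\ell-2}$. I would choose the weights so that only $2\varepsilon_3$-weight terms survive, which forces the image to be proportional to $\sum_i e_{\varepsilon_3-\varepsilon_i}(-1)e_{\varepsilon_3+\varepsilon_i}(-1)\mathbf1$ plus possibly a derivative term $\partial J^{\{e_{2\varepsilon_3}\}}$. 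That derivative term is absent because $2\varepsilon_3$ is not a root.
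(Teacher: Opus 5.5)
Your architecture is the paper's: closedness, reduced degree $1$ and $\mathfrak h^\natural$-weight $\theta_A$ force $\xi_\ell\in\mathbb C J_A$; then you kill the $A_1$ currents and the $G$'s and solve for $\omega$. Your (ii) is essentially the paper's Step~2. But you never establish the load-bearing claim of (i), namely $c_\ell\neq0$. Without it nothing puts $J_A$ into the kernel of $W_\ell\twoheadrightarrow R_\ell$, and (ii)--(iii) have no input. The extraction you propose cannot work. The $\lambda^1$-coefficient of $[J^{\{e_{-\theta_A}\}}{}_\lambda J_A]$ is $k^\natural_{A_1}(e_{-\theta_A}|e_{\theta_A})=0$, because the $A_1$ level is zero, as you use yourself in (ii). On the representative you would just find $e_{-\theta_A}(1)v_\ell=0$ by singularity, i.e.\ $0=c_\ell\cdot 0$. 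More generally, $J_A$ lies in the maximal ideal of the level-zero $A_1$ current algebra, so no sequence of current modes carries $\xi_\ell$ to a nonzero multiple of $\mathbf 1$. Your guessed value of $c_\ell$ therefore has nothing behind it. The paper detects the class classically instead. By Arakawa's Li-filtration theorem, $W_\ell/C_2(W_\ell)\cong\mathbb C[\mathcal S_{f_\theta}]$, and the image of $\xi_\ell$ is the restriction of the symbol $p_\ell=\sum_i e_{\varepsilon_1-\varepsilon_i}e_{\varepsilon_1+\varepsilon_i}$ to $f_\theta+\g_\ell^{e_\theta}$. On that slice $r_{e_\theta}\equiv1$ and $r_a\equiv0$ for $a\in(\g_\ell)_{1/2}$, so the restriction is $r_{e_{\theta_A}}$, whose value at $f_\theta+e_{-\theta_A}$ is $-1$. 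This is your heuristic ``$e_\theta$ becomes a constant'' made exact at the associated-graded level, where the $\g_{1/2}$ terms simply vanish and no normal-ordering corrections arise. A Miura-map computation would additionally need a cohomologous representative in the appropriate subcomplex and injectivity of the Miura map, and you supply neither.

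Part (iii) also fails as written. Every element of $(\g_\ell)_{-1/2}$ is a combination of $e_{-\varepsilon_a\pm\varepsilon_j}$ with $a\in\{1,2\}$ and $j\ge3$, whose $D_{\ell-2}$-weights are $\pm\varepsilon_j$. So there are no $u,v$ of $D_{\ell-2}$-weights $\varepsilon_3-\varepsilon_i$ and $\varepsilon_3+\varepsilon_i$. To reach $2\varepsilon_3$, both must have weight $\varepsilon_3$ and opposite $A_1$-weights, as in the paper's $u=e_{\varepsilon_3-\varepsilon_2}$, $v=e_{\varepsilon_3-\varepsilon_1}$. A single bracket then yields the whole sum, with no summation over $i$. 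For this pair $[u,v]=0$ and $[[e,u],v]=0$, so the $\omega$, Casimir and derivative terms drop out because their coefficients vanish. Weight bookkeeping then only places $G^{\{u\}}_{(0)}G^{\{v\}}$ in the span of the $X_j=:J^{\{e_{\varepsilon_3-\varepsilon_j}\}}J^{\{e_{\varepsilon_3+\varepsilon_j}\}}:$. It neither makes the coefficients equal nor excludes zero, so ``a nonzero multiple of $v_{\ell-2}$'' is unproved. The paper settles both points by listing the neutral-dual root-vector pairs: exactly two contribute to each $X_j$, each with coefficient $1$. This gives $2\sum_{j=4}^{\ell}X_j=2\psi_\ell(v_{\ell-2})$.
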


\begin{proof}
\emph{Step 1: the singular class.}
We first verify explicitly that
$v_\ell\otimes\mathbf1_{\mathrm{gh}}$ is BRST closed.  By affine
singularity,
$$
 (\g\otimes t\mathbb C[t])v_\ell=0,
 \qquad \mathfrak n_+(0)v_\ell=0,
$$
where $\mathfrak n_+$ is the positive nilpotent subalgebra for the fixed
choice of positive roots.  Since $\g_{>0}=\g_{1/2}\oplus\g_1\subset
\mathfrak n_+$, also $\g_{>0}(0)v_\ell=0$.  In the explicit BRST field
$d(z)$ of \cite[\S1]{KacWakimoto2004}, the only summand containing an
affine field is $\sum_\alpha u_\alpha(z)\varphi^\alpha(z)$ with
$u_\alpha\in\g_{>0}$.  In its residue, a ghost mode that does not kill
$\mathbf1_{\mathrm{gh}}$ is paired with an affine mode $u_\alpha(m)$,
$m\ge0$; the case $m=0$ is killed by $\g_{>0}(0)v_\ell=0$, and $m>0$ by
affine singularity.  The other three summands of $d(z)$ are ghost-only
fields, whose zero modes annihilate the ghost vacuum by the vacuum axiom.
Hence
$$
 d_{\mathrm{BRST}}(v_\ell\otimes\mathbf1_{\mathrm{gh}})=0.
$$
This is also the closedness used in
\cite[Lemma~7.3(a)]{KacWakimoto2004}.
Write $\xi_\ell=[v_\ell]_{\mathrm{DS}}$.  By
\cite[Lemma~7.3(b), equation~(7.3)]{KacWakimoto2004}, on the ghost vacuum
reduced conformal degree is $\Delta_{\mathrm{aff}}(w)-\mu(x_\theta)$ for
finite weight $\mu$; the ghost vacuum has weight zero.  Here
$\Delta_{\mathrm{aff}}(v_\ell)=2$ and, since $x_\theta=(H_1+H_2)/2$,
$(2\varepsilon_1)(x_\theta)=1$, so the cocycle has reduced degree $1$ and
$\xi_\ell\in W_\ell[1]$.  On $\mathfrak h^\natural=\mathbb C(H_1-H_2)\oplus
\bigoplus_{j=3}^{\ell}\mathbb C H_j$, its finite weight restricts to
$\theta_A$.  Hence the $\theta_A$-line of $W_\ell[1]$ is
$\mathbb C J_A$, so $\xi_\ell\in\mathbb C J_A$.

To prove nonvanishing, we make the classical comparison explicit.
For a vertex algebra $V$ set
$C_2(V)=\operatorname{span}_{\mathbb C}\{a_{(-2)}b:a,b\in V\}$.
The image of $v_\ell$ in
$V^{k_\ell}(\g_\ell)/C_2(V^{k_\ell}(\g_\ell))\cong S(\g_\ell)$ is
the quadratic polynomial
$p_\ell=\sum_{i=2}^{\ell}e_{\varepsilon_1-\varepsilon_i}
e_{\varepsilon_1+\varepsilon_i}$, where $S(\g_\ell)$ is the symmetric
algebra.  Identify $S(\g_\ell)$ with polynomial functions on $\g_\ell$
using $(\cdot|\cdot)$, so that a vector $a$ represents the function
$r_a(s)=(a|s)$.
The Slodowy slice is the affine space
$\mathcal S_{f_\theta}=f_\theta+\g_\ell^{e_\theta}$, with
$\g_\ell^{e_\theta}=\ker(\operatorname{ad}e_\theta)$.

Equip the BRST complex with the Li filtration.  By
\cite[Theorem~4.5.9]{Arakawa2015}, this filtration induces the Li
filtration on the actual BRST cohomology and yields
$\operatorname{gr} W_\ell\cong\mathbb C[\mathcal S_{f_\theta,\infty}]$;
hence \cite[Corollary~4.5.10]{Arakawa2015} gives
$$
 W_\ell/C_2(W_\ell)\cong \mathbb C[\mathcal S_{f_\theta}].
$$
Under this identification, the image of the class of a BRST-closed affine
singular vector is the classical Hamiltonian reduction of its Li symbol,
i.e. its restriction to the Slodowy slice; compare
\cite[Lemma~7.3]{ArakawaMoreau}.  Thus the image of $\xi_\ell$ modulo
$C_2(W_\ell)$ is $p_\ell|_{\mathcal S_{f_\theta}}$.  Consequently, a
nonzero restriction cannot come from a BRST boundary and implies
$\xi_\ell\ne0$.
The minimal $\mathfrak{sl}_2$-decomposition gives
$$
 \g_\ell^{e_\theta}
 =\g_\ell^\natural\oplus(\g_\ell)_{1/2}\oplus\mathbb Ce_\theta.
$$
Invariance gives $(\g_i|\g_j)=0$ unless $i+j=0$.  Thus, for
$s=f_\theta+z$ with $z\in\g_\ell^{e_\theta}$,
$r_{e_\theta}(s)=(e_\theta|f_\theta)=1$.  Moreover every
$a\in(\g_\ell)_{1/2}$ pairs trivially with $f_\theta\in(\g_\ell)_{-1}$
and with $z\in\g_\ell^\natural\oplus(\g_\ell)_{1/2}\oplus
\mathbb Ce_\theta$, so $r_a(s)=0$.  The $i=2$ summand of $p_\ell$ is
$e_{\theta_A}e_\theta$, whereas for $i\ge3$ both factors lie in
$(\g_\ell)_{1/2}$.  Hence
$$
 p_\ell|_{\mathcal S_{f_\theta}}=r_{e_{\theta_A}}|_{\mathcal S_{f_\theta}}.
$$
This function is nonzero: since $e_{-\theta_A}\in\g_\ell^\natural
\subset\g_\ell^{e_\theta}$, the point $f_\theta+e_{-\theta_A}$ lies
on the slice, and by \eqref{eq:root-normalization} its value is
$(e_{\theta_A}|f_\theta)+(e_{\theta_A}|e_{-\theta_A})=0-1=-1$.
Thus $\xi_\ell=c_\ell J_A$ with $c_\ell\ne0$.  Exactness applied to
$0\to\langle v_\ell\rangle\to V^{k_\ell}(D_\ell)\to Q_\ell\to0$ gives
$\pi_\ell:W_\ell\twoheadrightarrow R_\ell$, and naturality gives
$\pi_\ell(\xi_\ell)=0$.  Hence $J_A=0$ in $R_\ell$, proving \textup{(i)}.

\smallskip
\noindent\emph{Step 2: the surviving generators.}
Its kernel is current-mode stable and contains $J_A=J^{\{e_{\theta_A}\}}$.  For $a,b\in A_1\subset\g_\ell^\natural$, the current bracket is
$$
 [J^{\{a\}}{}_{\lambda}J^{\{b\}}]
 =J^{\{[a,b]\}}+k^\natural_{A_1}(a|b)\lambda,
$$
so its zero-mode part is the adjoint action of $A_1$.  Since
$[e_{-\theta_A},e_{\theta_A}]=-h_{\theta_A}$ and
$[e_{-\theta_A},h_{\theta_A}]=2e_{-\theta_A}$, we obtain in $R_\ell$
$$
 0=(J^{\{e_{-\theta_A}\}})_{(0)}J^{\{e_{\theta_A}\}}
   =-J^{\{h_{\theta_A}\}},
 \qquad
 0=(J^{\{e_{-\theta_A}\}})_{(0)}J^{\{h_{\theta_A}\}}
   =2J^{\{e_{-\theta_A}\}}.
$$
Thus all three $A_1$ currents
$J^{\{e_{\theta_A}\}},J^{\{h_{\theta_A}\}},J^{\{e_{-\theta_A}\}}$
vanish in $R_\ell$.

The identity
$[J^{\{a\}}{}_{\lambda}G^{\{u\}}]=G^{\{[a,u]\}}$
holds in the universal minimal $W$-algebra
\cite[Theorem~2.1(c), equation~(2.6)]{AKMPPStructural}.
As an $A_1$-module, $(\g_\ell)_{-1/2}\cong
L_{A_1}(\varpi)^{\oplus(2\ell-4)}$, so it has no trivial summand and
$[A_1,(\g_\ell)_{-1/2}]=(\g_\ell)_{-1/2}$.  Thus for any
$u$ there are $a_i\in A_1$ and $u_i\in(\g_\ell)_{-1/2}$ with
$u=\sum_i[a_i,u_i]$.  Linearity and the zero-mode identity give
$\pi_\ell(G^{\{u\}})=\sum_i\pi_\ell(J^{\{a_i\}})_{(0)}
\pi_\ell(G^{\{u_i\}})=0$,
since every $A_1$ current vanishes in $R_\ell$.  Hence all $G$-fields vanish.

The minimal five-step $\mathfrak{sl}_2$-grading makes
$\operatorname{ad}e=\tfrac12\operatorname{ad}e_\theta$ an isomorphism
$(\g_\ell)_{-1/2}\to(\g_\ell)_{1/2}$.  Hence
$\beta(p,q):=(e|[p,q])=([e,p]|q)$ is a nondegenerate alternating form on
$(\g_\ell)_{-1/2}$, since $(\cdot|\cdot)$ pairs
$(\g_\ell)_{1/2}$ and $(\g_\ell)_{-1/2}$ nondegenerately.  Choose
$p,q$ with $\beta(p,q)\ne0$.  Let $U_D\subset R_\ell$ be the vertex
subalgebra generated by the surviving $D_{\ell-2}$ currents; in particular
$TU_D\subset U_D$.  In $R_\ell$ all $G$-fields and $A_1$ currents vanish;
with dual bases adapted to the orthogonal sum
$\g_\ell^\natural=A_1\oplus D_{\ell-2}$, every surviving quadratic or
derivative term in \cite[(1.1)]{AKMPPStructural} lies in $U_D$.  Hence
$0=c\omega+F$ with $F\in U_D$ and
$c=-2(k_\ell+h^\vee)(e|[p,q])\in\mathbb C^\times$, since
$k_\ell+h^\vee(D_\ell)=\ell\ne0$.  Thus $\omega=-c^{-1}F\in U_D$.
The standard strong generators therefore reduce to
the $D_{\ell-2}$ currents, so $U_D=R_\ell$.  For $a,b\in D_{\ell-2}$,
\cite[Theorem~2.1(c)]{AKMPPStructural} and the normalized restriction above give
$[J^{\{a\}}{}_{\lambda}J^{\{b\}}]=J^{\{[a,b]\}}+(4-\ell)(a|b)\lambda$.
Thus the surviving currents satisfy exactly the defining affine current
relations of $V^{4-\ell}(D_{\ell-2})$ with the same normalized form; since
they strongly generate $R_\ell$, the universal property gives the surjection
\eqref{eq:universal-surj}.  This proves \textup{(ii)}.

\smallskip
\noindent\emph{Step 3: the quadratic relation.}
Take $u=e_{\varepsilon_3-\varepsilon_2}$ and
$v=e_{\varepsilon_3-\varepsilon_1}$, with roots $\alpha_u,\alpha_v$.
Since $\alpha_u(x_\theta)=\alpha_v(x_\theta)=-1/2$, both lie in
$(\g_\ell)_{-1/2}$.  Now
$\alpha_u+\alpha_v=2\varepsilon_3-\varepsilon_1-\varepsilon_2$ is neither
zero nor a $D_\ell$-root, so $[u,v]=0$.  Also
$[e,u]\in\g_{\theta+\alpha_u}$ with
$\theta+\alpha_u=\varepsilon_1+\varepsilon_3$, while
$\theta+\alpha_u+\alpha_v=2\varepsilon_3$ is neither zero nor a root;
hence $[[e,u],v]=0$.

To make the specialization of the $G$--$G$ bracket explicit, take the
constant coefficient in \cite[(1.1)]{AKMPPStructural}.  With the
normalization fixed above, it gives
\begin{align}
 G^{\{u\}}_{(0)}G^{\{v\}}
 ={}&-2(k_\ell+h^\vee)(e|[u,v])\,\omega
 +(e|[u,v])\sum_\alpha
 :J^{\{a^\alpha\}}J^{\{a_\alpha\}}:\notag\\
 &+\sum_\gamma
 :J^{\{[u,y^\gamma]^\natural\}}
  J^{\{[y_\gamma,v]^\natural\}}:
 +2(k_\ell+1)\,\partial
 J^{\{[[e,u],v]^\natural\}} .
 \label{eq:GG-zero-product}
\end{align}
Here $\{a_\alpha\}$ and $\{a^\alpha\}$ are dual bases of
$\g_\ell^\natural$ for $(\cdot|\cdot)$, while $\{y_\gamma\}$ and
$\{y^\gamma\}$ are dual bases of $(\g_\ell)_{1/2}$ for the neutral form
$\langle a,b\rangle_{\mathrm{ne}}=(f_\theta|[a,b])$, with convention
$\langle y_\gamma,y^\delta\rangle_{\mathrm{ne}}=\delta_\gamma^\delta$.
Since $[u,v]=0$, both terms multiplied by $(e|[u,v])$ vanish, including
the Virasoro term.  Since $[[e,u],v]=0$, the derivative term vanishes as
well.  Thus the only surviving constant term is the neutral contraction:
\begin{equation}\label{eq:contraction}
 G^{\{u\}}_{(0)}G^{\{v\}}=\mathcal B_\ell(u,v):=
 \sum_\gamma
 :J^{\{[u,y^\gamma]^\natural\}}
  J^{\{[y_\gamma,v]^\natural\}}:.
\end{equation}
Choose $\{y_\gamma\}$ to be the root-vector basis
$\{e_{\varepsilon_a\pm\varepsilon_j}:a=1,2,\ 3\le j\le\ell\}$.

The contraction is $\g_\ell^\natural$-equivariant: its neutral form is
invariant since $[\g_\ell^\natural,f_\theta]=0$, the projection is equivariant,
and current zero modes act derivationally on normal products.  As $u,v$ have $D_{\ell-2}$-weight
$\varepsilon_3$ and opposite $A_1$ weights, $\mathcal B_\ell(u,v)$ has
$D_{\ell-2}$-weight $2\varepsilon_3$, $A_1$ weight zero, and conformal weight $2$.
Since \eqref{eq:contraction} is already a sum of normal products of two
$\g_\ell^\natural$ currents, no $G$-field or $\omega$ occurs.  Reordering to
PBW form can only add a derivative current, whose $D_{\ell-2}$-weight is a root
or zero, never $2\varepsilon_3$.  Mixed $A_1$--$D_{\ell-2}$ and $A_1$--$A_1$
terms likewise have $D_{\ell-2}$-weight a root (or zero) and zero, respectively.
Hence only two $D_{\ell-2}$ root currents can contribute; a Cartan factor would
force the other factor to have weight $2\varepsilon_3$.
By free generation \cite[p.~2]{AKMPPStructural}, the relevant quadratic current
space has PBW basis
$$
 X_j=
 :J^{\{e_{\varepsilon_3-\varepsilon_j}\}}
  J^{\{e_{\varepsilon_3+\varepsilon_j}\}}:,
 \qquad 4\le j\le\ell.
$$
Indeed, the only pairs of roots of $D_{\ell-2}$ summing to
$2\varepsilon_3$ are
$\varepsilon_3-\varepsilon_j$ and $\varepsilon_3+\varepsilon_j$
for $j\ge4$.

For a fixed $j\ge4$, a summand in \eqref{eq:contraction}
can contribute to $X_j$ only if its two projected brackets
lie in the root spaces
$\g_{\varepsilon_3-\varepsilon_j}$ and
$\g_{\varepsilon_3+\varepsilon_j}$, in either order.
Since $u$ has root $\varepsilon_3-\varepsilon_2$,
this forces $y^\gamma$ to be proportional to
$e_{\varepsilon_2-\varepsilon_j}$ or
$e_{\varepsilon_2+\varepsilon_j}$, respectively.
With our matrix normalization,
$\langle e_{\varepsilon_1+\varepsilon_j},-e_{\varepsilon_2-\varepsilon_j}\rangle_{\rm ne}
=\langle e_{\varepsilon_1-\varepsilon_j},-e_{\varepsilon_2+\varepsilon_j}\rangle_{\rm ne}=1$,
and pairings with different $j$ vanish.  Hence precisely the following two
neutral-dual pairs can contribute to the PBW coefficient of $X_j$.
Their brackets follow from \eqref{eq:root-normalization} and
$[E_{a,b},E_{c,d}]=\delta_{bc}E_{a,d}-\delta_{da}E_{c,b}$:
$$
\begin{array}{c|c|c|c}
 y^\gamma & y_\gamma & [u,y^\gamma] & [y_\gamma,v]\\ \hline
 -e_{\varepsilon_2-\varepsilon_j} & e_{\varepsilon_1+\varepsilon_j}
 & e_{\varepsilon_3-\varepsilon_j}
 & e_{\varepsilon_3+\varepsilon_j}\\
 -e_{\varepsilon_2+\varepsilon_j} & e_{\varepsilon_1-\varepsilon_j}
 & e_{\varepsilon_3+\varepsilon_j}
 & e_{\varepsilon_3-\varepsilon_j}
\end{array}
$$
These brackets lie in $D_{\ell-2}\subset\g_\ell^\natural$, so the
projections in \eqref{eq:contraction} are trivial.  The two resulting root
currents commute and pair to zero, so their normal products agree; hence
each row contributes $X_j$, and therefore
\begin{equation}\label{eq:quadratic-image}
 \mathcal B_\ell(u,v)=2\sum_{j=4}^{\ell}X_j.
\end{equation}
Let $\eta_1,\ldots,\eta_{\ell-2}$ be the standard coordinates of
$D_{\ell-2}$ and identify $\eta_r$ with $\varepsilon_{r+2}$.  Since
$r<s$ iff $r+2<s+2$, standard positive roots map to standard positive roots;
the corresponding matrix embedding sends the normalized root vectors in
\eqref{eq:root-normalization} exactly to those on coordinates
$3,\ldots,\ell$.  As $\psi_\ell$ preserves $(-1)$-products, this gives
$\psi_\ell(v_{\ell-2})=\sum_{j=4}^{\ell}X_j$.
In $R_\ell$ both $G^{\{u\}}$ and $G^{\{v\}}$ vanish by Step~2, so
\eqref{eq:contraction} and~\eqref{eq:quadratic-image} give
$0=2\psi_\ell(v_{\ell-2})$, hence $\psi_\ell(v_{\ell-2})=0$, proving \textup{(iii)}.
\end{proof}

\begin{proposition}[Parity-free rank reduction]\label{prop:rank-reduction}
For every $\ell\ge5$, minimal reduction yields a natural surjection
$$
 Q_{\ell-2}\twoheadrightarrow
 H^0_{\mathrm{DS},f_\theta}(Q_\ell).
$$
\end{proposition}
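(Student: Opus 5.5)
The surjection comes from factoring the homomorphism $\psi_\ell$ of Proposition~\ref{prop:local-reduction}(ii) through the quotient $Q_{\ell-2}=V^{4-\ell}(D_{\ell-2})/\langle v_{\ell-2}\rangle$. First I identify the source. The level of $\psi_\ell$ is $4-\ell=2-(\ell-2)=k_{\ell-2}$, and $D_{\ell-2}$ sits on the coordinates $\varepsilon_3,\ldots,\varepsilon_\ell$ with the normalized root vectors of \eqref{eq:root-normalization}. Hence $V^{4-\ell}(D_{\ell-2})$ is exactly the universal algebra whose quotient by $\langle v_{\ell-2}\rangle$ defines $Q_{\ell-2}$. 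This holds even at the endpoint $\ell=5$, where $Q_3$ was defined by the same formula \eqref{eq:vell}, as noted in Section~\ref{sec:conventions}.

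Next I show that $\ker\psi_\ell$ contains $\langle v_{\ell-2}\rangle$. The map $\psi_\ell$ is a vertex-algebra homomorphism, so its kernel is a vertex-algebra ideal of $V^{4-\ell}(D_{\ell-2})$. By Proposition~\ref{prop:local-reduction}(iii) this ideal contains $v_{\ell-2}$, so it contains the ideal $\langle v_{\ell-2}\rangle$ generated by $v_{\ell-2}$. Note that this step does not need Lemma~\ref{lem:proper}, which is stated only for rank at least $5$; the inclusion of the generated ideal in the kernel holds by definition.

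It follows that $\psi_\ell$ descends to a homomorphism $\bar\psi_\ell:Q_{\ell-2}\to R_\ell=H^0_{\mathrm{DS},f_\theta}(Q_\ell)$. This map is surjective because $\psi_\ell$ is surjective by Proposition~\ref{prop:local-reduction}(ii).

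For naturality, I would observe that $\bar\psi_\ell$ is determined by sending each current $a(-1)\mathbf1$ with $a\in D_{\ell-2}$ to the image of $J^{\{a\}}$ under $\pi_\ell:W_\ell\twoheadrightarrow R_\ell$. So $\bar\psi_\ell$ is canonical and compatible with the surjection $W_\ell\twoheadrightarrow R_\ell$ induced by exactness. There is no real obstacle, since all the substantive work sits in Proposition~\ref{prop:local-reduction}. The one point to check carefully is that the identification of $D_{\ell-2}$ and its normalized root vectors, used in Step~3 to write $\psi_\ell(v_{\ell-2})=\sum_{j}X_j$, matches the conventions defining $v_{\ell-2}$. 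That check was already made at the end of Step~3.
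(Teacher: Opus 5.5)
Your proposal is correct and follows essentially the same route as the paper. The paper also identifies the level $4-\ell=2-(\ell-2)$ and uses that $\ker\psi_\ell$ is a vertex ideal containing $v_{\ell-2}$, hence containing $\langle v_{\ell-2}\rangle$. It then factors the surjection $\psi_\ell$ of Proposition~\ref{prop:local-reduction}(ii) through $Q_{\ell-2}$ by the quotient universal property.
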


\begin{proof}
Since $4-\ell=2-(\ell-2)$, the definition gives
$Q_{\ell-2}=V^{4-\ell}(D_{\ell-2})/\langle v_{\ell-2}\rangle$.
By Proposition~\ref{prop:local-reduction}\textup{(ii)} there is a surjection
$\psi_\ell:V^{4-\ell}(D_{\ell-2})\twoheadrightarrow R_\ell$, and part
\textup{(iii)} gives $v_{\ell-2}\in\ker\psi_\ell$.  Since
$\ker\psi_\ell$ is a vertex ideal, it contains the vertex ideal
$\langle v_{\ell-2}\rangle$ generated by this vector.  Hence the quotient
universal property gives a unique surjection
$Q_{\ell-2}\twoheadrightarrow R_\ell=H^0_{\mathrm{DS},f_\theta}(Q_\ell)$.
\end{proof}

\begin{remark}
The proof of Proposition~\ref{prop:local-reduction} holds in both
parities.  In particular, for $\ell=5$, equation~\eqref{eq:quadratic-image}
reads $\mathcal B_5(u,v)=2(X_4+X_5)$ and gives the endpoint
$D_5\to D_3$ of the odd-rank chain.
\end{remark}

\section{Odd-rank induction and maximal ideals}
\label{sec:odd}

We now assume that $\ell$ is odd.

\subsection{The base case}
\label{sec:base}

The odd chain terminates at $D_3\cong A_3=\mathfrak{sl}_4$ and $k=-1$.
To make the identification of the quadratic vector explicit, let
$\eta_1,\eta_2,\eta_3$ be the orthonormal coordinates for this $D_3$,
and write its simple roots as $\beta_1=\eta_1-\eta_2$,
$\beta_2=\eta_2-\eta_3$ and $\beta_3=\eta_2+\eta_3$.
Let $\alpha_1,\alpha_2,\alpha_3$ be the consecutive simple roots of $A_3$.
Choose the accidental isomorphism $D_3\cong A_3$ by
$\beta_1\mapsto\alpha_2$, $\beta_2\mapsto\alpha_1$ and
$\beta_3\mapsto\alpha_3$.  The pullback of the normalized $A_3$ form is
invariant on $D_3$ and has roots of squared length $2$, hence equals our
normalized form by uniqueness.  Thus this isomorphism preserves the affine
cocycle and extends at level $-1$ to
$V^{-1}(D_3)\cong V^{-1}(\mathfrak{sl}_4)$.  Moreover,
$\varpi^{D_3}_1=\eta_1$ maps to $\varpi^{A_3}_2$, and hence the finite
highest weight $2\varpi^{D_3}_1$ of $v_3$ maps to $2\varpi^{A_3}_2$.
Furthermore,
$$
 v_3=
 e_{\eta_1-\eta_2}(-1)e_{\eta_1+\eta_2}(-1)\mathbf1
 +e_{\eta_1-\eta_3}(-1)e_{\eta_1+\eta_3}(-1)\mathbf1.
$$
To fix the signs, write $F_{a,b}$ for the standard $4\times4$ matrix
units, and choose the Lie algebra isomorphism sending
$e_{\beta_1},e_{\beta_2},e_{\beta_3}$ to $F_{2,3},F_{1,2},F_{3,4}$,
respectively.  Taking brackets, with the normalization
\eqref{eq:root-normalization}, gives
$$
 \begin{aligned}
 e_{\eta_1-\eta_2}&\longmapsto F_{2,3},&
 e_{\eta_1+\eta_2}&\longmapsto F_{1,4},\\
 e_{\eta_1-\eta_3}&\longmapsto-F_{1,3},&
 e_{\eta_1+\eta_3}&\longmapsto F_{2,4}.
 \end{aligned}
$$
For example, $e_{\eta_1-\eta_3}=[e_{\beta_1},e_{\beta_2}]$ maps to
$[F_{2,3},F_{1,2}]=-F_{1,3}$.
Thus, with the standard positive root vectors in $\mathfrak{sl}_4$,
the image of $v_3$ is exactly
$$
 e_{\alpha_1+\alpha_2+\alpha_3}(-1)e_{\alpha_2}(-1)\mathbf1
 -e_{\alpha_2+\alpha_3}(-1)e_{\alpha_1+\alpha_2}(-1)\mathbf1.
$$
For $l=3$, $n=1$ this is $v_{3,1}$ of \cite[(5.1)]{AdamovicPerse},
singular by \cite[Theorem~4.1]{AdamovicPerse} and of finite weight
$2\varpi_2$.  For $n=4$, the element $v_1$ of
\cite[\S7]{ArakawaMoreau} is
$e_{\theta}e_{\alpha_2}-e_{\alpha_1+\alpha_2}e_{\alpha_2+\alpha_3}$;
the two factors in each product commute, so its symmetrization $\sigma(v_1)$
is exactly the displayed state.  Theorem~7.2 of loc. cit. says that
$\sigma(v_1)$ generates the maximal affine submodule of
$V^{-1}(\mathfrak{sl}_4)$.  Since $-1+h^\vee(A_3)=3\ne0$, the Sugawara
current-mode argument of Lemma~\ref{lem:proper} applies verbatim, so
$U(\widehat{\mathfrak{sl}}_4)\sigma(v_1)=\langle\sigma(v_1)\rangle$ as a
vertex ideal; hence the quotient is $L_{-1}(\mathfrak{sl}_4)$.
The Hamiltonian-reduction argument for the same base case is also reviewed in
\cite[\S5.1]{APV}.  Consequently
\begin{equation}\label{eq:base}
 Q_3\cong L_{-1}(D_3).
\end{equation}

\subsection{The induction}

\begin{theorem}\label{thm:odd}
For every odd integer $\ell\ge3$, one has
$Q_\ell\cong L_{2-\ell}(D_\ell)$.  In particular, for every odd
$\ell\ge5$, $\Rad V^{2-\ell}(D_\ell)=\langle v_\ell\rangle$.
\end{theorem}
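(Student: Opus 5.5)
We argue by induction on odd $\ell\ge3$. The base case $\ell=3$ is exactly \eqref{eq:base}. For the inductive step, let $\ell\ge5$ be odd and assume $Q_{\ell-2}\cong L_{4-\ell}(D_{\ell-2})$. Then $Q_{\ell-2}$ is simple, and it is nonzero because $\langle v_{\ell-2}\rangle$ is proper. For $\ell-2\ge5$ this is Lemma~\ref{lem:proper}; for $\ell-2=3$ it follows from the base case.

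Proposition~\ref{prop:rank-reduction} gives a surjection $Q_{\ell-2}\twoheadrightarrow R_\ell=H^0_{\mathrm{DS},f_\theta}(Q_\ell)$, and $R_\ell\neq0$ by Lemmas~\ref{lem:proper} and~\ref{lem:detection}. We need this surjection to be a vertex-algebra homomorphism. It is, because $\psi_\ell$ comes from the universal property of $V^{4-\ell}(D_{\ell-2})$ applied to the currents of $R_\ell$. Its kernel is therefore an ideal of the simple algebra $Q_{\ell-2}$. That ideal is proper, since $R_\ell\ne0$, so it is zero. Hence $R_\ell\cong Q_{\ell-2}$ is nonzero and simple.

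Next apply Proposition~\ref{prop:maximality} with $\g=D_\ell$, $k=2-\ell$ and $N=\langle v_\ell\rangle$. We check its hypotheses. The level $k=2-\ell$ lies in $\mathbb Z_{<0}$ and differs from $-h^\vee=2-2\ell$. The ideal $N$ is graded by Lemma~\ref{lem:proper}. Finally $N\subset\Rad V^{k}(D_\ell)$: since $N$ is proper, its image in $L_k(D_\ell)$ is an ideal not containing $\mathbf1$, hence zero by simplicity. Equivalently, $\Rad$ is the unique maximal proper graded ideal and contains every proper graded ideal. The proposition then gives $Q_\ell\cong L_{2-\ell}(D_\ell)$ and $\langle v_\ell\rangle=\Rad V^{2-\ell}(D_\ell)$, which completes the induction.

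The main obstacle is the logical chain in the inductive step rather than any calculation. Simplicity of $Q_{\ell-2}$ must transfer to $R_\ell$, and this requires both that the surjection of Proposition~\ref{prop:rank-reduction} respects the vertex-algebra structure and that $R_\ell\neq0$. Once these are in place the maximality principle does the lifting. One point to watch is that the identification $Q_{\ell-2}=V^{4-\ell}(D_{\ell-2})/\langle v_{\ell-2}\rangle$ at the endpoint $\ell=5$ uses the same normalized $v_3$ as in Section~\ref{sec:base}. This holds because Proposition~\ref{prop:local-reduction}(iii) was stated for all $\ell\ge5$, and the coordinate identification there matches the $\eta$-coordinates used for $D_3$.
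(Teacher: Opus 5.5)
Your proposal is correct and follows essentially the same route as the paper. It inducts from \eqref{eq:base}, transfers simplicity along the surjection of Proposition~\ref{prop:rank-reduction} using $R_\ell\ne0$ from Lemmas~\ref{lem:proper} and~\ref{lem:detection}, and then lifts via Proposition~\ref{prop:maximality}. The only cosmetic difference is how you get $\langle v_\ell\rangle\subset\Rad V^{2-\ell}(D_\ell)$: you push the proper graded ideal into the simple quotient, whereas the paper notes that $J+\Rad V^{2-\ell}(D_\ell)$ stays proper, and both arguments rest on the vanishing of the degree-zero part.
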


\begin{proof}
The case $\ell=3$ is \eqref{eq:base}.  Let now $\ell\ge5$ be odd and assume
inductively that $Q_{\ell-2}\cong L_{4-\ell}(D_{\ell-2})$.  By
Proposition~\ref{prop:rank-reduction}, there is a surjection
$L_{4-\ell}(D_{\ell-2})\cong Q_{\ell-2}\twoheadrightarrow R_\ell$.
Lemma~\ref{lem:proper} makes $Q_\ell$ nonzero; as a graded quotient of the
vacuum module it is lower-bounded with finite-dimensional homogeneous
$D_\ell$-components, while $k_\ell=2-\ell<0$ and
$k_\ell\ne-h^\vee(D_\ell)$.  Thus Lemma~\ref{lem:detection} gives
$R_\ell=H_{\mathrm{DS}}(Q_\ell)\ne0$.  The kernel of the displayed
surjection is therefore a proper ideal of the simple source, hence zero.
Consequently $R_\ell\cong Q_{\ell-2}\cong L_{4-\ell}(D_{\ell-2})$, so
$R_\ell=H^0_{\mathrm{DS},f_\theta}(Q_\ell)$ is simple.

It remains only to verify explicitly the hypotheses of
Proposition~\ref{prop:maximality}.  By Lemma~\ref{lem:proper},
$\langle v_\ell\rangle$ is a proper graded ideal of
$V^{2-\ell}(D_\ell)$.  Since $L_{2-\ell}(D_\ell)$ is simple, $\Rad V^{2-\ell}(D_\ell)$ is
maximal among proper graded ideals.  If $J$ is proper graded, then
$J+\Rad V^{2-\ell}(D_\ell)$ is still proper because its degree-zero part is
zero; hence $J\subset\Rad V^{2-\ell}(D_\ell)$.  Taking
$J=\langle v_\ell\rangle$ gives the required inclusion.
Thus all hypotheses of Proposition~\ref{prop:maximality} are satisfied, and
it yields both $Q_\ell\cong L_{2-\ell}(D_\ell)$ and
$\langle v_\ell\rangle=\Rad V^{2-\ell}(D_\ell)$.
\end{proof}

\begin{corollary}[Odd DS chain]\label{cor:odd-chain}
For every odd $\ell\ge5$, the natural surjection of
Proposition~\ref{prop:rank-reduction} is an isomorphism
$Q_{\ell-2}\xrightarrow{\sim}H^0_{\mathrm{DS},f_\theta}(Q_\ell)$:
Theorem~\ref{thm:odd} makes the source simple, while
Lemmas~\ref{lem:proper} and~\ref{lem:detection} make the target nonzero.
At the step $D_m\to D_{m-2}$ the lower-rank level is
$4-m=2-(m-2)$.  Thus successive minimal reductions satisfy
$H^0_{\mathrm{DS},f_{\theta_m}}(Q_m)\cong Q_{m-2}$ for
$m=\ell,\ell-2,\ldots,5$, and terminate at $Q_3$, where $f_{\theta_m}$
denotes the chosen minimal nilpotent of $D_m$.
\end{corollary}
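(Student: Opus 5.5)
The corollary has two parts: the natural surjection of Proposition~\ref{prop:rank-reduction} is an isomorphism, and iterating it gives the odd chain. I would prove the first part by a simplicity argument and the second by iterating over odd $m$.

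Fix odd $\ell\ge5$ and let
$$
 \rho_\ell:Q_{\ell-2}\twoheadrightarrow R_\ell=H^0_{\mathrm{DS},f_\theta}(Q_\ell)
$$
be the surjection of Proposition~\ref{prop:rank-reduction}. Since $\ell-2\ge3$ is odd, Theorem~\ref{thm:odd} gives $Q_{\ell-2}\cong L_{4-\ell}(D_{\ell-2})$, so the source is simple. Lemma~\ref{lem:proper} shows $Q_\ell\ne0$, and $Q_\ell$ is a graded quotient of the vacuum module. Because $k_\ell=2-\ell<0$ and $k_\ell\ne-h^\vee$, Lemma~\ref{lem:detection} then gives $R_\ell\ne0$.

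Hence $\ker\rho_\ell$ is a vertex ideal of the simple algebra $Q_{\ell-2}$. It is proper, since the target is nonzero, so $\ker\rho_\ell=0$ and $\rho_\ell$ is an isomorphism. This repeats the first paragraph of the proof of Theorem~\ref{thm:odd}, except that the induction hypothesis is now replaced by the theorem itself.

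For the chain statement, note that $4-m=2-(m-2)$. So the target level $k^\natural_{D_{m-2}}$ computed before Proposition~\ref{prop:local-reduction} equals $k_{m-2}$, which is exactly the level at which $Q_{m-2}$ is defined. I would apply the first part at each odd $m=\ell,\ell-2,\ldots,5$. Each application uses the minimal nilpotent $f_{\theta_m}$ of $D_m$, chosen with the conventions of Section~\ref{sec:conventions} on the coordinates of that rank, and gives $H^0_{\mathrm{DS},f_{\theta_m}}(Q_m)\cong Q_{m-2}$. Since $m\ge5$ at every step, the hypothesis $\ell\ge5$ of Proposition~\ref{prop:rank-reduction} holds throughout. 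The chain stops at $m=5$, whose target is $Q_3$; the $D_5\to D_3$ step is covered by the Remark following Proposition~\ref{prop:rank-reduction}.

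The only delicate point is bookkeeping: the reduction at rank $m$ must use the correctly renormalized $D_{m-2}$ coordinates, so that its output is literally the $Q_{m-2}$ built from \eqref{eq:vell}. This is the coordinate identification $\eta_r\mapsto\varepsilon_{r+2}$ from Step~3 of Proposition~\ref{prop:local-reduction}, and it transfers to each step without change.
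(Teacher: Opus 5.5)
Your proposal is correct and follows the paper's argument. Theorem~\ref{thm:odd} makes $Q_{\ell-2}$ simple, and Lemmas~\ref{lem:proper} and~\ref{lem:detection} make $H^0_{\mathrm{DS},f_\theta}(Q_\ell)$ nonzero, so the kernel of the surjection is a proper ideal of a simple algebra and vanishes; the chain is then this statement applied at each odd $m\ge5$, using $4-m=2-(m-2)$.
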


\begin{remark}[Compatibility with the known collapse]
For odd $\ell\ge5$, Theorem~\ref{thm:odd} and
Corollary~\ref{cor:odd-chain} give
$H^0_{\mathrm{DS},f_\theta}(L_{2-\ell}(D_\ell))\cong
L_{4-\ell}(D_{\ell-2})$.  This agrees exactly with
\cite[Theorem~2.3, Table~4, (2.6), and (2.8)]{AKMPP}: for
$D_\ell=\mathfrak{so}(2\ell)$ one has
$p(k)=(k+2)(k+\ell-2)$, while the $A_1\oplus D_{\ell-2}$ component
levels at $k=2-\ell$ are $0$ and $4-\ell$; (2.8) discards the zero-level
factor.  Here the collapse is recovered from quotient-level rank reduction
rather than used in the proof.
\end{remark}

\begin{remark}[Relation with the even-rank argument]
Section~\ref{sec:rank} isolates the quadratic-current step of
\cite[\S8.3]{JinAM}: with the same $u,v$, neutral pairing and matrix
normalization, the $G$--$G$ contraction is twice the lower-rank quadratic
vector.  This step is parity-free.  Even rank additionally carries the two
Pfaffian relations through reduction \cite[\S8.2]{JinAM} and ends at $D_4$;
odd rank has no Pfaffian relations and ends at the simple $D_3\cong A_3$
quadratic quotient.
\end{remark}

\bigskip
\noindent\textsc{Department of Mathematics, Sichuan University}\\
\textit{Email address:} \texttt{jinsihai@stu.scu.edu.cn}

\end{document}